\newtheorem{theorem}{Theorem}[section]
\newtheorem{lemma}[theorem]{Lemma}
\newtheorem{prop}[theorem]{Proposition}
\newtheorem{cor}[theorem]{Corollary}
\newtheorem{remark}[theorem]{Remark}
\def \R{\mathbb{R}}
\def \Rn{\mathbb{R}^n}
\def \e{\varepsilon}
\def \grad{\nabla}
\def \A{\mathcal{A}}
\def \half{\frac{1}{2}}
\numberwithin{equation}{section}
\begin{document}

\title[Acousto-Optic Tomography]{Inverse Transport and Acousto-Optic Imaging}

\author[Chung]{Francis J. Chung}
\address{Department of Mathematics, University of Kentucky, Lexington, KY, USA}
\email{fj.chung@uky.edu}

\author[Schotland]{John C. Schotland}
\address{Department of Mathematics and Department of Physics, University of Michigan, Ann Arbor, MI, USA}
\email{schotland@umich.edu}

\date{\today}

\begin{abstract}
We consider the inverse problem of recovering the optical properties of a highly-scattering medium from acousto-optic measurements. Using such measurements, we show that the scattering and absorption coefficients of the radiative transport equation can be reconstructed with Lipschitz stability by means of algebraic inversion formulas. 
\end{abstract}

\subjclass[2000]{Primary 35R30}

\keywords{radiative transport equation, inverse problem, hybrid inverse problem, acousto-optic tomography}

\maketitle

\section{Introduction}
\subsection{Background}

The development of effective methods for optical imaging of highly-scattering media is a problem of considerable practical importance~\cite{Arridge_2009}. We note that biomedical applications are of particular interest, since optical methods are widely employed to image physiological function and various biomolecular processes. In an optical imaging experiment, a medium of interest is illuminated by a narrow collimated beam and the light that propagates through the medium is collected by an array of detectors. The optical properties of the medium are then reconstructed by solving an inverse problem, a typical example being to recover the coefficients of an elliptic partial differential equation from boundary measurements. It is well known that such problems are severely ill-posed, which leads to reconstructed images with relatively low spatial resolution~\cite{Uhlmann_2009,Bal_2009}.

Acousto-optic tomography (AOT) is a recently proposed method that mitigates certain limitations of optical imaging. The physical principle is to perform an optical imaging experiment in which the optical properties of the medium are spatially modulated by an acoustic wave. The associated inverse problem consists of two steps. In the first step, by proper choice of the acoustic field together with boundary measurements of the optical field, a functional of the unknown coefficients is recovered. This functional, which serves as a proxy for measurements of the optical field, is known everywhere in the medium. The second step consists of recovering the unknown coefficients from the internal functional. This inverse problem is well-posed, resulting in reconstructions with good spatial resolution. See~\cite{ammari_2008,bal_2012_a,bal_2010,bal_2012_b,bal_2012_c,bal_2013,BalSchotland_PhysRev2014,BCS,capdeboscq_2009,gebauer_2009,kuchment,kuchment_2012,monard_2011,mclaughlin_2004,mclaughlin_2010,nachman} for examples of multi-wave inverse problems in other physical settings.

The standard approach to modeling the propagation of light in AOT makes use of the diffusion approximation (DA) to the radiative transport equation (RTE)~\cite{BalSchotland,BalMoskow,Ammari_2014_1,Ammari_2014_2,Ammari_2013}. The DA breaks down in optically thin layers, in weakly scattering or strongly absorbing media, and near boundaries. One or more of these conditions is often met in biomedical applications. In this paper, we consider the inverse problem of AOT within the framework of radiative transport theory. We find that the attenuation and scattering coefficients of the RTE can be reconstructed with Lipschitz stability by means of \emph{algebraic} inversion formulas. In contrast, we note that for the case of AOT within the DA, only iterative reconstruction methods have been proposed~\cite{BalSchotland,BalMoskow,Ammari_2014_1,Ammari_2014_2,Ammari_2013}.
 
\subsection{Main Results}

Let $X$ be a bounded domain in ${\mathbb R}^n$ with smooth boundary $\partial X$, for dimension $n\ge 2$. The propagation of multiply-scattered light is taken to be governed by the RTE 
\begin{align}
\label{RTE}
\theta \cdot \grad u + \sigma(x) u = \int_{S^{n-1}} k(x,\theta,\theta ')u(x,\theta ') d \theta'  \ .
\end{align}
Here $u(x,\theta)$ is the intensity of light at the point $x\in X$ 
traveling in the direction $\theta\in S^{n-1}$. The coefficients $\sigma$ and $k$ describe the attenuation and scattering, respectively, of light in $X$. We will assume that $\sigma$ belongs to $L^{\infty}(X)$ and $k$ is continuous. We will also assume that $k$ obeys the reciprocity relation 
\begin{equation}
\label{IsotropiK}
k(x,\theta, \theta ') = k(x,-\theta ', -\theta).
\end{equation}

To guarantee solvability of the RTE, we follow ~\cite{ChoulliStefanov} and assume that one of the following conditions holds: either an \emph{absorption condition}
\begin{equation}
\label{AbsorptionCondition}
\sigma - \rho \geq \alpha,
\end{equation}
for some positive constant $\alpha$, or a \emph{smallness condition}
\begin{equation}
\label{SmallnessCondition}
\tau \rho < 1.
\end{equation}
Here $\rho$ is defined by
\begin{equation}
\rho = \left\| \int_{S^{n-1}} | k(x,\theta, \theta')| d\theta'  \right\|_{L^{\infty}(X \times {S^{n-1}})}.
\end{equation}
We also define the subsets $\Gamma_{\pm}$ of $\partial X \times {S^{n-1}}$ by
\begin{equation}
\Gamma_{\pm} = \{(x,\theta) \in \partial X \times {S^{n-1}}| \pm \theta \cdot n(x)  > 0\}
\end{equation}
where $n(x)$ is the outward unit normal vector at $x$.  
Then we have the following existence result, which we state here in a form adapted from Theorem 2.1 in~\cite{BCS}.

\begin{prop}
\label{RegularityTheorem}
Let $f_{-} \in L^{\infty}(\Gamma_{-})$.  Under the conditions on $X$, $\sigma$, and $k$ given above, the equation \eqref{RTE} has a unique solution $u$ obeying the boundary condition
\[
u|_{\Gamma_{-}} = f_{-}.
\]
Moreover, for $1 \leq p \leq \infty$,
\begin{equation}
\label{SolutionEstimate}
\|u\|_{L^p({S^{n-1}}, L^{\infty}(X))} \leq C_p \|f_{-}\|_{L^p({S^{n-1}}, L^{\infty}(\partial X))}
\end{equation}
for some constant $C_p$ depending on $X$, $\sigma$, and $k$.
\end{prop}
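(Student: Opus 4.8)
The plan is to convert the boundary value problem into an integral equation by the method of characteristics and then solve it by a Neumann series. Fix $\theta \in S^{n-1}$ and let $\tau_-(x,\theta)$ denote the distance one must travel from $x$ in the direction $-\theta$ to reach $\partial X$, so that the entry point $x-\tau_-(x,\theta)\theta$ lies in $\Gamma_-$; set $\tau = \sup_{(x,\theta)}\tau_-(x,\theta)$, the maximal chord length appearing in \eqref{SmallnessCondition}. Writing the transport operator as $T = \theta\cdot\grad + \sigma$ and the collision operator as $(Ku)(x,\theta) = \int_{S^{n-1}}k(x,\theta,\theta')u(x,\theta')\,d\theta'$, the RTE reads $Tu = Ku$. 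Integrating the resulting ODE along each characteristic against the attenuation factor $G(x,\theta,t) = \exp\bigl(-\int_0^t \sigma(x-s\theta)\,ds\bigr)$ and imposing $u|_{\Gamma_-} = f_-$ yields the Duhamel representation
\[
u = \S f_- + \A u ,
\]
where $(\S f_-)(x,\theta) = G(x,\theta,\tau_-)\,f_-(x-\tau_-\theta,\theta)$ is the ballistic term and $\A = T^{-1}K$ sends $g$ to $\int_0^{\tau_-}G(x,\theta,t)\,(Kg)(x-t\theta,\theta)\,dt$. Solving \eqref{RTE} with the prescribed data is then equivalent to inverting $I-\A$ on $L^p(S^{n-1},L^\infty(X))$, and once this is done existence, uniqueness, and the bound \eqref{SolutionEstimate} all follow from the convergent series $u = \sum_{m\ge 0}\A^m\S f_-$, provided $\|\A\|<1$.

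The ballistic term is easy to control. Since $0\le G\le 1$ (the absorption condition even forcing exponential decay), one has $\|(\S f_-)(\cdot,\theta)\|_{L^\infty(X)}\le\|f_-(\cdot,\theta)\|_{L^\infty(\partial X)}$ pointwise in $\theta$, and taking the $L^p$ norm in $\theta$ gives $\|\S f_-\|_{L^p(S^{n-1},L^\infty(X))}\le\|f_-\|_{L^p(S^{n-1},L^\infty(\partial X))}$. The constant $C_p$ in \eqref{SolutionEstimate} can therefore be taken to be $(1-\|\A\|)^{-1}$, and the entire problem reduces to estimating the operator norm of $\A$.

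The heart of the argument, and the step I expect to be the main obstacle, is to show that $\A$ is a contraction on the \emph{mixed-norm} space $L^p(S^{n-1},L^\infty(X))$. The difficulty is that the spatial supremum sits inside the angular $L^p$ norm, so the collision integral over $\theta'$ cannot simply be commuted with $\sup_x$. For $p=\infty$ one integrates $\int_0^{\tau_-}G\,dt$ against $\sup_x\int_{S^{n-1}}|k|\,d\theta'\le\rho$ to obtain $\|\A\|_{L^\infty}\le\tau\rho$ under \eqref{SmallnessCondition}, or $\|\A\|_{L^\infty}\le\rho/(\alpha+\rho)$ under \eqref{AbsorptionCondition}; in both cases the constant is strictly less than one. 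For finite $p$ I would bound $|u(x-t\theta,\theta')|$ by $U(\theta'):=\|u(\cdot,\theta')\|_{L^\infty(X)}$, apply Hölder in $\theta'$ to peel off a factor $\rho^{1/p'}$, and then estimate the remaining angular integral by a Schur test. Here the reciprocity relation \eqref{IsotropiK} is essential: it renders the kernel $\sup_x|k(x,\theta,\theta')|$ symmetric under $(\theta,\theta')\mapsto(-\theta',-\theta)$, so its two Schur sums coincide and the resulting $L^p\to L^p$ bound is uniform in $p$. Combining these estimates -- or, alternatively, interpolating between the $p=1$ and $p=\infty$ bounds on the scale $L^p(S^{n-1},L^\infty(X))$ -- gives $\|\A\|<1$, which is exactly where \eqref{AbsorptionCondition} and \eqref{SmallnessCondition} are used. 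The delicate point is to arrange the order of the $t$-, $\theta'$-, and $x$-operations so that the operator norm closes up against the geometric quantities $\tau$ and $\rho$ of the hypotheses; this is carried out as in Theorem 2.1 of~\cite{BCS}.
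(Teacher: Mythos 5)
The paper does not actually prove this proposition: it is quoted as an adaptation of Theorem 2.1 of~\cite{BCS}, so there is no in-paper argument to compare against line by line. Your Duhamel/Neumann-series decomposition $u=Jf_-+\sum_{m\ge1}K^m Jf_-$ with $K=T_1^{-1}A_2$ is exactly the machinery the paper later records as Proposition 3.1 (from~\cite{ChoulliStefanov}), so the strategy is the right one and consistent with how the authors use the result. Your treatment of the ballistic term and of the case $p=\infty$ is complete and correct: $0\le G\le 1$ gives $\|K\|\le\tau\rho<1$ under the smallness condition, and $\int_0^{\tau_-}e^{-(\rho+\alpha)t}\,dt\le(\rho+\alpha)^{-1}$ gives $\|K\|\le\rho/(\rho+\alpha)<1$ under the absorption condition. (Minor point: you reuse $\A$ for the collision part of the resolvent, whereas the paper reserves $\A$ for the albedo operator and calls your operator $K$.)

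The one step that does not close as written is the finite-$p$ contraction on the mixed space $L^p(S^{n-1},L^\infty(X))$. After bounding $|u(x-t\theta,\theta')|$ by $U(\theta')=\|u(\cdot,\theta')\|_{L^\infty(X)}$ you are forced to estimate $\sup_x\int_{S^{n-1}}|k(x,\theta,\theta')|U(\theta')\,d\theta'$ by $\int_{S^{n-1}}\kappa(\theta,\theta')U(\theta')\,d\theta'$ with $\kappa(\theta,\theta')=\sup_x|k(x,\theta,\theta')|$, and your Schur test then involves $\sup_\theta\int_{S^{n-1}}\kappa(\theta,\theta')\,d\theta'$. Because the spatial supremum now sits \emph{inside} the angular integral, this quantity dominates $\rho=\sup_{x,\theta}\int|k(x,\theta,\theta')|\,d\theta'$ and can strictly exceed it, so neither \eqref{AbsorptionCondition} nor \eqref{SmallnessCondition} forces the resulting operator norm below one. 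The reciprocity relation \eqref{IsotropiK} does, as you say, identify the two Schur sums of $\kappa$ (the column sum at $\theta'$ is the row sum at $-\theta'$), but that symmetry does not repair the loss incurred by commuting $\sup_x$ with $\int d\theta'$. You flag this as the delicate point and defer it to~\cite{BCS}, which is defensible given that the paper defers the entire proposition there, but if you want a self-contained argument you need either to keep the $L^\infty(X)$ norm outside the angular integration throughout (proving the $p=1$ and $p=\infty$ endpoint bounds directly on the mixed norm before interpolating) or to impose the contraction hypothesis on $\kappa$ rather than on $\rho$.
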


For notational convenience, we define 
\begin{equation}
\label{DefnA}
Au = -\sigma u + \int_{S^{n-1}}k(x,\theta,\theta')u(x, \theta')d\theta'. 
\end{equation}
Then if we do not need to consider $\sigma$ and $k$ separately, we can write the RTE \eqref{RTE} in the form
\begin{equation}
\label{ShortRTE}
(\theta \cdot \grad - A)u = 0.
\end{equation}
Note that a result similar to Theorem~\ref{RegularityTheorem} is true if the RTE \eqref{RTE} is replaced by the adjoint equation
\begin{equation}\label{AdjointRTE}
(-\theta \cdot \grad - A)v = 0.
\end{equation}
In fact, if $u$ solves \eqref{RTE}, then a calculation shows that the function $\tilde{u}$ defined by $\tilde{u}(x,\theta) = u(x,-\theta)$ solves \eqref{AdjointRTE}.

The above existence result, combined with a trace theorem for the solutions to the RTE (see~\cite{DautrayLions}), means that we can define the albedo operator $\A: L^{\infty}(\Gamma_{-}) \rightarrow L^{\infty}(\Gamma_{+})$ by
\[
\A(f) = u|_{\Gamma_{+}}.
\]
The problem of recovering $\sigma$ and $k$ from $\A$ has been addressed by Choulli and Stefanov ~\cite{ChoulliStefanov} and reviewed in~\cite{Bal_2009}. The inverse problem of AOT is formulated as follows. Suppose that an acoustic pressure wave of the form $\cos(q \cdot x + \varphi)$ is incident on the medium, where $q$ is the wave vector  
and $\varphi$ is the phase of the wave. Following~\cite{BalSchotland}, we find that the coefficients $\sigma$ and $k$ are modulated according to 
\begin{eqnarray*}
\sigma_{\e} &=& (1 + \e \cos(q \cdot x + t)) \sigma \\
k_{\e} &=& (1 + \e \cos(q \cdot x + t)) k \ ,
\end{eqnarray*}
where $0 < \e \ll 1$ is the dimensionless amplitude of the acoustic wave.  The RTE (\ref{RTE}) thus becomes
\begin{equation}
\label{ModulatedRTE}
\theta \cdot \grad u_\e + \sigma_{\e} u_\e = \int_{{S^{n-1}}} k_{\e}(x,\theta,\theta ')u_\e(x,\theta ') \, d \theta ' ,
\end{equation}
where the dependence of $u$ on $\epsilon$ has been made explicit.
For sufficiently small $\e$, the conditions (\ref{AbsorptionCondition}) and (\ref{SmallnessCondition}) on $\sigma$ and $k$ ensure that (\ref{ModulatedRTE}) also has a unique solution.  Therefore, for suitable values of $q$ and $\varphi$, we can obtain new albedo maps $\A_{\e}(q,\varphi)$ defined by
\[
\A_{\e}(q,\varphi)(f) \mapsto u_\e|_{\Gamma_{+}},
\]
where $u_\e$ solves \eqref{ModulatedRTE} with the appropriate values of $\e, q,$ and $t$.  It will be convenient, for a fixed value of $f$, to view $\A_{\e}(q,\varphi)(f)$ as a function of $q$ and $\varphi$.  Then $\A_{\e}(f)$ is a map from $\R^n \times \R$ to $C(\Gamma_{+})$.  

The purpose of this paper is to show that the maps $\A_{\e}$ can be used to determine $\sigma$ and $k$.  More specifically, we have the following three results.  First, the maps $\A_{\e}$ can be used to recover an internal functional of $\sigma$ and $k$. 

\begin{prop}\label{InternalFunctional}
Suppose $f,g \in L^{\infty}(\Gamma_{-})$.  Let $u$ be the solution to the RTE \eqref{RTE} with boundary condition $u|_{\Gamma_-} = f$, and $v$ be the solution to the adjoint RTE \eqref{AdjointRTE} with boundary condition $v|_{\Gamma_+} = \tilde{g}$.  Then $\A(g)$ and $\A_{\e}(f)$ determine the internal functional $H \in L^{\infty}(X)$ defined by
\begin{equation}\label{Functional}
H(x) = \int_{S^{n-1}} Au \, v \, d\theta
\end{equation}
up to order $\e$. Moreover if $H_1$ and $H_2$ are functionals obtained from the same initial data $(f,g)$, but separate sets of coefficients $\sigma_1, k_1$ and $\sigma_2, k_2$, we have the stability estimate
\begin{eqnarray*}
\|H_1 - H_2\|_{L^{\infty}(X)} &\lesssim& \|g\|_{L^1(\Gamma_{-})}\|\A^1_{\e}(f)-\A^2_{\e}(f)\|_{L^1(\Rn \times \{0,\frac{\pi}{2}\}, L^{\infty}(\Gamma_{+}))} \\
& &  + \|f\|_{L^1(\Gamma_{-})}\|\A^1(g)-\A^2(g)\|_{L^{\infty}(\Gamma_{+})} + O(\e).\\
\end{eqnarray*}
\end{prop}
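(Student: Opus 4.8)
The plan is to combine a first-order expansion in $\e$ with a Green's identity for the transport operator, and then to recover $H$ by Fourier inversion in the acoustic wave vector $q$. First I would record that the modulation multiplies the entire collision operator, $A_\e = (1+\e\cos(q\cdot x+t))A$, so that substituting $u_\e = u + \e u_1 + O(\e^2)$ into \eqref{ModulatedRTE} and matching powers of $\e$ gives $(\theta\cdot\grad - A)u = 0$ with $u|_{\Gamma_-} = f$ at order $\e^0$, and $(\theta\cdot\grad - A)u_1 = \cos(q\cdot x+t)Au$ with $u_1|_{\Gamma_-} = 0$ at order $\e^1$. Proposition~\ref{RegularityTheorem}, applied to the modulated equation (whose coefficients satisfy the same hypotheses for small $\e$), guarantees these solutions and supplies the uniform $L^\infty$ bounds that control the remainder; in particular $\int_{S^{n-1}} (Au_\e)\,v\,d\theta = H + O(\e)$.

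Next I would pair the modulated equation against the adjoint solution $v$. Multiplying $(\theta\cdot\grad - A_\e)u_\e = 0$ by $v$, integrating over $X\times S^{n-1}$, and integrating the transport term by parts, the volume terms containing $A$ cancel: the reciprocity relation \eqref{IsotropiK}, together with the fact that $v$ solves the adjoint equation \eqref{AdjointRTE}, leaves only boundary terms and the modulation term,
\[
\int_{\Gamma_+}(\theta\cdot n)\,u_\e v\,d\theta\,dS - \int_{\Gamma_-}|\theta\cdot n|\,u_\e v\,d\theta\,dS = \e\int_X \cos(q\cdot x+t)\Big(\int_{S^{n-1}}(Au_\e)\,v\,d\theta\Big)dx.
\]
On $\Gamma_+$ one has $v = \tilde{g}$ and $u_\e|_{\Gamma_+} = \A_\e(f)$, while on $\Gamma_-$ one has $u_\e = f$; moreover the reflection $\check v(x,\theta) = v(x,-\theta)$ solves the RTE \eqref{RTE} with data $g$, so $v|_{\Gamma_-}$ is $\A(g)$ read off at the reflected direction. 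Hence both boundary integrals are determined by $\A_\e(f)$ and $\A(g)$.

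Combining these, the right-hand side equals $\e\int_X\cos(q\cdot x+t)H\,dx + O(\e^2)$, so the choices $t=0$ and $t=\frac{\pi}{2}$ return the cosine and sine transforms of $H$, i.e. the full Fourier transform $\widehat H(q)$ for every $q\in\R^n$; since $X$ is bounded and $H\in L^\infty(X)$, Fourier inversion recovers $H$ up to the $O(\e)$ error, which proves the determination claim. For the stability estimate I would apply this to two coefficient sets sharing the data $(f,g)$ and subtract; because $f$ and $\tilde{g}$ are common, the left-hand difference is $\int_{\Gamma_+}(\theta\cdot n)(\A^1_\e(f)-\A^2_\e(f))\tilde{g} - \int_{\Gamma_-}|\theta\cdot n| f(v^1-v^2)$, bounded respectively by $\|g\|_{L^1(\Gamma_-)}\|\A^1_\e(f)-\A^2_\e(f)\|_{L^\infty(\Gamma_+)}$ and $\|f\|_{L^1(\Gamma_-)}\|\A^1(g)-\A^2(g)\|_{L^\infty(\Gamma_+)}$, using the reflection identity for $v|_{\Gamma_-}$. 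Estimating $\|H_1-H_2\|_{L^\infty(X)}$ by the $L^1(\R^n)$ norm of its Fourier transform and integrating the first bound over $q$ (summed over the two phases) produces the mixed norm $\|\A^1_\e(f)-\A^2_\e(f)\|_{L^1(\R^n\times\{0,\frac{\pi}{2}\},L^\infty(\Gamma_+))}$ appearing in the statement.

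I expect the main obstacle to be this final inversion step rather than the Green's identity. The $\Gamma_-$ contribution is independent of $q$, so one must argue that its inverse Fourier transform concentrates at a single point and therefore enters the interior estimate only through the $q$-free $L^\infty$ norm of $\A^1(g)-\A^2(g)$, instead of through a divergent integral in $q$. Keeping the $O(\e)$ remainders uniform in $q$ (and carefully tracking the powers of $\e$ produced by dividing the reconstruction by $\e$), together with verifying the reciprocity bookkeeping that collapses the collision terms in the Green's identity, are the remaining points that will need care.
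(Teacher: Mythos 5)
Your proposal is correct and follows essentially the same route as the paper: the same Green's identity pairing $u_\e$ against the adjoint solution $v$, cancellation of the collision terms by self-adjointness of $A$ and $A_\e$ (via the reciprocity relation), the expansion $u_\e = u + O(\e)$, recovery of $\widehat{H}(q)$ by varying $q$ and the phase, and the same subtraction argument for stability. If anything you are more explicit than the paper about the Fourier-inversion bookkeeping and the $q$-independence of the $\Gamma_-$ boundary term, points the paper's proof passes over silently.
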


Since $H$ depends on the choices of the boundary conditions $f$ and $g$ for $u$ and $v$, we will sometimes write $H(f,g)(x)$ whenever we want to emphasize this distinction.  

The second and third results state that for appropriate choices of $f$ and $g$, the functionals $H(f,g)$ can be used to determine $\sigma$ and $k$. In the following two theorems, the lengths $\tau_{\pm} = \tau_{\pm}(x,\theta)$ are defined to be the distances from $x$ to $\Gamma_{\pm}$ in the direction of $\pm \theta$. In other words $\tau_{\pm}$ are defined so that $x \pm \tau_{\pm}\theta \in \Gamma_{\pm}$.

\begin{theorem}\label{AbsorptionRecovery}
Let $h > 0$ be small, and let $\theta_0$ be any fixed element of $S^{n-1}$.  There exists an $f_h \in L^{\infty}(\Gamma_{-})$, which is a function of the angular variable only, such that $H(f_h,f_h)(x)$ and $\A_0(h^{\half(n-1)}f_h)(x + \tau_{+}\theta_0,\theta_0)$ are $O(1)$, and
\begin{equation}\label{SigmaFromH}
\sigma(x) = \frac{H(f_h,f_h)(x)}{\A_0(h^{\half(n-1)}f_h)(x + \tau_{+}\theta_0,\theta_0)} + o(h).
\end{equation} 
\end{theorem}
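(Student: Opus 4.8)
The plan is to choose $f_h$ as a sharply peaked angular profile and to extract $\sigma(x)$ from the unscattered (ballistic) part of the beam it generates, while showing that every scattering contribution is negligible. First I would exploit the coincidence $f=g=f_h$: since $u$ solves \eqref{RTE} with $u|_{\Gamma_-}=f_h$ and $v$ solves \eqref{AdjointRTE} with $v|_{\Gamma_+}=\tilde f_h$, the uniqueness in Proposition~\ref{RegularityTheorem} together with the remark that $\tilde u$ solves \eqref{AdjointRTE} forces $v=\tilde u$, i.e. $v(x,\theta)=u(x,-\theta)$. Then, using \eqref{DefnA}, the functional \eqref{Functional} collapses to $H(x)=-\int_{S^{n-1}}\sigma(x)\,u(x,\theta)u(x,-\theta)\,d\theta+\int_{S^{n-1}}\bigl(\int_{S^{n-1}} k\,u\bigr)u(x,-\theta)\,d\theta$, so I only need to understand the single field $u$.

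Next I would split $u=u_b+u_s$ into ballistic and scattered parts, where $u_b(x,\theta)=f_h(\theta)\exp(-\int_0^{\tau_-}\sigma(x-s\theta)\,ds)$ solves $\theta\cdot\grad u_b+\sigma u_b=0$ and $u_s$ is driven by the scattering source $\int_{S^{n-1}} k\,u_b$. The reason for concentrating $f_h$ near $\pm\theta_0$ is that $u_b(x,\theta)$ and $u_b(x,-\theta)$ are simultaneously large only for $\theta$ within $O(h)$ of $\pm\theta_0$, and there the product of the two exponential factors telescopes to the \emph{full-chord} attenuation $\exp(-\int_{x-\tau_-\theta_0}^{x+\tau_+\theta_0}\sigma)$. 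To fix the sign and normalization I would take $f_h$ \emph{odd} under $\theta\mapsto-\theta$ (a positive bump at $\theta_0$ minus its reflection at $-\theta_0$), scaled by $h^{-\half(n-1)}$. This choice has two crucial features: $\int_{S^{n-1}}f_h^2\,d\theta=O(1)$, so the localized attenuation integral contributes at order one, and $\int_{S^{n-1}}f_h\,d\theta=0$, so the leading angular moment of the scattering source vanishes. Because the two bumps carry opposite signs, $-\sigma\,u(x,\theta)u(x,-\theta)$ becomes positive, and a direct computation gives $H(x)=c\,\sigma(x)\exp(-\int_{x-\tau_-\theta_0}^{x+\tau_+\theta_0}\sigma)+o(h)$ for an explicit constant $c$ determined by the bump profile.

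The denominator is treated identically: $\A_0(h^{\half(n-1)}f_h)(x+\tau_+\theta_0,\theta_0)$ is the outgoing intensity at the exit end of the chord, and its ballistic part is exactly $h^{\half(n-1)}f_h(\theta_0)\exp(-\int_{x-\tau_-\theta_0}^{x+\tau_+\theta_0}\sigma)$, an $O(1)$ multiple of the same full-chord attenuation (the $-\theta_0$ bump does not radiate into the $\theta_0$ exit direction). Choosing the profile so that $c/\bigl(h^{\half(n-1)}f_h(\theta_0)\bigr)=1$, the attenuation factors cancel in the quotient \eqref{SigmaFromH} and leave $\sigma(x)$, while both numerator and denominator are manifestly $O(1)$ by construction.

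The hard part will be proving that \emph{every} scattering contribution is genuinely $o(h)$, uniformly in $x$. Here I would lean on the $L^p(S^{n-1},L^{\infty}(X))$ bound \eqref{SolutionEstimate} to control $u_s$ and the scattered part of the exit data, and crucially on the vanishing moment $\int_{S^{n-1}}f_h\,d\theta=0$ together with the continuity of $k$ to gain the extra power of $h$ beyond the naive single-scattering estimate (which is only $O(h)$ when $n=2$). I would also verify that replacing $\sigma$, the exponential factors, and $k$ by their values at $\pm\theta_0$ across the $O(h)$-wide bumps costs only $o(h)$, and that the reciprocity relation \eqref{IsotropiK} does not generate an unexpected leading-order scattering term. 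Sharpening all of these error bounds to beat the target $o(h)$ precision, rather than merely $O(h)$, is the delicate point of the argument.
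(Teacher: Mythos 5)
Your construction is sound in outline and runs parallel to the paper's: a sharply peaked angular source scaled by $h^{\half(1-n)}$, a ballistic/scattered splitting (your $u_b+u_s$ is exactly the paper's $Jf_h+\sum_{j\ge1}K^jJf_h$ from Proposition \ref{UDecomp}), a Lebesgue-differentiation collapse of the angular integral, and cancellation of the full-chord attenuation against the ballistic part of the albedo data. The one genuinely different design choice is your odd, two-bump $f_h$. This is forced on you by taking Proposition \ref{InternalFunctional}'s convention $v|_{\Gamma_+}=\tilde g$ literally, which gives $v=\tilde u$ and would make a single bump useless (the supports of $Au$ and $v$ in $\theta$ would be disjoint). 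The paper instead quietly imposes $v|_{\Gamma_+}=f_h$ (a bump at $+\theta_0$ on the outgoing boundary), so $v\approx\tilde Jf_h$ is concentrated at the \emph{same} direction as $u\approx Jf_h$ and a single bump suffices; the full-chord attenuation then appears as the statement that $J(1)\tilde J(1)$ is constant along lines parallel to $\theta_0$. Your fix is legitimate and arguably more faithful to the stated definition of $H(f,g)$, at the cost of a sign/normalization bookkeeping that you handle correctly.

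The step that would fail is the claimed "crucial" use of the vanishing moment $\int_{S^{n-1}}f_h\,d\theta=0$ to gain an extra power of $h$ in the scattering estimate. The single-scattering source is $\int_{S^{n-1}}k(x,\theta,\theta')\,Jf_h(x,\theta')\,d\theta'$, in which $f_h(\theta')$ is weighted by $k(x,\theta,\theta')\exp\bigl(-\int_0^{\tau_-(x,\theta')}\sigma(x-s\theta')\,ds\bigr)$. This weight takes different values at $\theta'=\theta_0$ and $\theta'=-\theta_0$ (different attenuation chords and different kernel values), so the two oppositely signed bumps do \emph{not} cancel at leading order; you are left with exactly the naive bound $O(\|f_h\|_{L^1(S^{n-1})})=O(h^{\half(n-1)})$, the same as for the paper's one-bump source. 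Consequently the cross terms in $H$ are $O(h^{n-1})$, which is $o(h)$ for $n\ge3$ but only $O(h)$ for $n=2$ --- a weakness your proposal shares with the paper's own proof (which asserts $\|f_h\|_{L^1}=O(h)$, true only for $n\ge3$), but your proposed remedy does not repair it. Beyond that, the error analysis you defer as "the delicate point" is in fact the bulk of the paper's argument (Corollary \ref{PreservingSingularity} and Lemmas \ref{SingularFunctional}--\ref{UndoneIntegral}); it is carried out by pairing $L^1$ smallness of $Jf_h$ in $\theta$ against $L^\infty$ smallness of $u-Jf_h$, not by the $L^p(S^{n-1},L^\infty(X))$ estimate of Proposition \ref{RegularityTheorem}, and you would need to write it out to have a complete proof.
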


\begin{theorem}\label{ScatteringRecovery}
Let $h > 0$ and suppose that $\sigma$ is known. There exists a family of functions $g^{\theta}_h \in L^{\infty}(\R^n \times S^{n-1})$, parametrized by $\theta \in S^{n-1}$, such that for $\theta_1 \neq \theta_2$, 
\begin{equation}
\label{KFromH}
k(x,\theta_2, \theta_1) = \left| H(g^{\theta_1}_h, g^{\theta_2}_h)\exp\left(\int_0^{\tau_{+}(x,\theta_2)}\sigma(x + s\theta_2)ds+\int_0^{\tau_{-}(x,\theta_1)}\sigma(x - s\theta_1)ds\right)\right| +o(h).
\end{equation} 
\end{theorem}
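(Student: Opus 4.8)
The plan is to recover $k(x,\theta_2,\theta_1)$ at a fixed $x$ by illuminating the medium with two thin ``pencil'' beams that cross at $x$: boundary data $g^{\theta_1}_h$ is chosen so that the forward solution $u$ of \eqref{RTE} is, to leading order, a beam through $x$ in direction $\theta_1$, while $g^{\theta_2}_h$ is chosen (through the tilde) so that the adjoint solution $v$ of \eqref{AdjointRTE} is a beam through $x$ in direction $\theta_2$. Concretely I would take $g^{\theta_1}_h$ to be an approximate point mass on $\Gamma_-$ concentrated at the entry point $x-\tau_-(x,\theta_1)\theta_1$ and direction $\theta_1$, with angular width tending to $0$ as $h\to 0$ and normalized so that its angular mass is one; $g^{\theta_2}_h$ is chosen analogously, concentrated at $x+\tau_+(x,\theta_2)\theta_2\in\Gamma_+$ and direction $-\theta_2$ so that $\widetilde{g^{\theta_2}_h}$ drives $v$ along $\theta_2$. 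Writing $u=u_0+u_s$ and $v=v_0+v_s$, where $u_0,v_0$ are the ballistic parts obtained by dropping the scattering integral, Proposition~\ref{RegularityTheorem} and the method of characteristics give $u_0(x,\theta)\to c_1\,\delta_{\theta_1}$ and $v_0(x,\theta)\to c_2\,\delta_{\theta_2}$ as measures in $\theta$, with
\[ c_1=\exp\!\Big(-\!\int_0^{\tau_-(x,\theta_1)}\!\!\sigma(x-s\theta_1)\,ds\Big),\qquad c_2=\exp\!\Big(-\!\int_0^{\tau_+(x,\theta_2)}\!\!\sigma(x+s\theta_2)\,ds\Big), \]
which are exactly the reciprocals of the exponentials in \eqref{KFromH}.

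Using \eqref{DefnA} I would expand the internal functional \eqref{Functional} as
\[ H(x)=-\sigma(x)\int_{S^{n-1}}u\,v\,d\theta+\int_{S^{n-1}}\!\int_{S^{n-1}}k(x,\theta,\theta')\,u(x,\theta')\,v(x,\theta)\,d\theta'\,d\theta, \]
and substitute the splitting $u=u_0+u_s$, $v=v_0+v_s$. Since $\theta_1\neq\theta_2$, the angular supports of $u_0(x,\cdot)$ and $v_0(x,\cdot)$ are disjoint once $h$ is small, so the entire absorption term contributes nothing to leading order. The surviving leading contribution is the ballistic--ballistic piece of the scattering term, which converges to $k(x,\theta_2,\theta_1)\,c_1c_2$ because $u_0,v_0$ concentrate at $\theta_1,\theta_2$ and $k$ is continuous. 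Multiplying $H$ by $(c_1c_2)^{-1}$, which equals the exponential factor in \eqref{KFromH} and is computable since $\sigma$ is assumed known, then isolates $k(x,\theta_2,\theta_1)$; the absolute value in \eqref{KFromH} simply absorbs the signed lower-order remainder.

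The crux is to show that every term of $H$ other than this ballistic--ballistic one is of size $o(h)$. I would expand $u_s$ and $v_s$ through the Neumann (Born) series associated with \eqref{RTE} and \eqref{AdjointRTE}, bounding each term by Proposition~\ref{RegularityTheorem}, and then exploit the geometry of the beams. The scattering source $\int k\,u_0\,d\theta'$ is supported in a thin tube about the $\theta_1$-ray, and its adjoint analogue about the $\theta_2$-ray; integrating such a source along a ray from $x$ in a direction transverse to the tube axis produces a factor equal to the short length in which that ray meets the tube, which tends to $0$ with the beam width. This transversality estimate is what kills the remaining terms: the cross terms $\int u_0v_s$ and $\int u_sv_0$ of the absorption term evaluate $v_s(x,\theta_1)$ and $u_s(x,\theta_2)$, i.e. each scattered field at the \emph{other} beam's direction, which is negligible precisely because $\theta_1\neq\theta_2$; the corresponding cross terms in the scattering integral, as well as the doubly scattered term $\int\int k\,u_sv_s$, are controlled by the same thin-tube transversality. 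The delicate bookkeeping is to tie the angular and spatial widths of the beams to $h$ so that these transversal contributions, together with the $O(\text{width})$ error from replacing $k(x,\theta,\theta')$ by $k(x,\theta_2,\theta_1)$ across the angular supports, are all $o(h)$; the requirement $\theta_1\neq\theta_2$, which guarantees genuine transversality of the two beams, is used throughout, and this is why the formula is stated only in that case.
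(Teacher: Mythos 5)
Your leading-order computation is the right one: the ballistic--ballistic piece of the scattering term produces $k(x,\theta_2,\theta_1)c_1c_2$, the absorption term drops out because the angular supports at $x$ are disjoint when $\theta_1\neq\theta_2$, and dividing by $c_1c_2$ gives \eqref{KFromH}. But your construction does not prove the theorem as stated, and the gap sits exactly where the paper's main idea lives. The theorem asks for a family $g^{\theta}_h$ parametrized by $\theta$ \emph{alone}, with \eqref{KFromH} holding at every $x\in X$; your pencil beams are concentrated at the entry and exit points of the rays through the particular $x$, so the sources depend on $x$. Since each internal functional $H(f,g)$ costs a full sweep of acousto-optic measurements, this proves a genuinely weaker statement with much larger data requirements. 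The paper's $g^{\theta_1}_h$ is instead angularly concentrated but \emph{spatially global}, equal to $h^{\half(1-n)}f^{\theta_1}_h(\theta)\,s(x_1/h)\cdots s(x_{n-1}/h)$ with a sign oscillating on scale $h$ transversally to $\theta_1$. The smallness of the scattered field is then not a thin-tube transversality effect --- there is no tube, and $\|g^{\theta_1}_h(x,\cdot)\|_{L^1(S^{n-1})}=O(1)$ at every boundary point, so Corollary \ref{PreservingSingularity} is unavailable --- but an oscillatory-cancellation effect: $A_2Jg^{\theta_1}_h$ inherits the sign pattern and the ray integrals in $T_1^{-1}$ kill it via the Riemann--Lebesgue mechanism except within $h^{1/2}$ of $\theta_1$ (Lemma \ref{uMinusJg}). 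That is what lets one source pair serve all $x$ simultaneously, and it is also the true reason for the absolute value in \eqref{KFromH}: the leading term is $\pm k\,c_1c_2$ with an $x$-dependent sign inherited from $s$, not, as you suggest, a device to absorb a signed remainder.

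Even read as a proof of the pointwise statement, two steps need repair. First, normalization: you fix the angular mass of the \emph{boundary} data, but what must converge to $c_1\delta_{\theta_1}$ is $u_0(x,\cdot)$ at the interior point $x$; with a source concentrated in both space (width $\epsilon$) and angle (width $\delta$), the angular support of $u_0(x,\cdot)$ is cut down to roughly $\min(\delta,\epsilon/\tau_{-})$, so the mass actually arriving at $x$ depends on $\epsilon/\delta$ and on $\tau_{-}(x,\theta_1)$. You must take $\delta$ much smaller than $\epsilon$ and renormalize, or the constant multiplying $k$ in the limit is wrong. Second, the claimed error is $o(h)$, not merely $o(1)$: your transversality bounds give $O(\epsilon)$-type gains (with a logarithmic loss in $n=2$ from near-axial scattering directions), so the widths must be calibrated against $h$ quantitatively --- this is exactly the ``delicate bookkeeping'' you defer, and it is the part the paper replaces with the explicit estimates \eqref{uMinusJg2} and \eqref{uMinusJg3}.
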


\begin{remark}
Given a scattering kernel $k(x, \theta, \theta')$ which depends only on $x$ and the angle between $\theta$ and $\theta'$, then we can make do with a one-parameter family of measurements by fixing $\theta_1$ and taking a one parameter family of $\theta_2$ so as to produce all angles between $\theta_1$ and $\theta_2$.  
\end{remark}

The formulas \eqref{SigmaFromH} and \eqref{KFromH} immediately imply the following Lipschitz stability estimates. 

\begin{theorem}
If $H_1$ and $H_2$ are functionals obtained from separate sets of coefficients $\sigma_1, k_1$ and $\sigma_2, k_2$, then
\[
\|\sigma_1 - \sigma_2\|_{L^{\infty}(X)} \lesssim \|H_1(f_h,f_h) - H_2(f_h,f_h)\|_{L^{\infty}(X)} + o(h)
\]
and
\[
\|k_1 - k_2\|_{L^{\infty}(X\times {S^{n-1}}\times {S^{n-1}})} \lesssim \sup_{\theta_1,\theta_2}\left\| H_1(g^{\theta_1}_h,g^{\theta_2}_h) - H_2(g^{\theta_1}_h,g^{\theta_2}_h) \right\|_{L^{\infty}(X)} + o(h).
\]
\end{theorem}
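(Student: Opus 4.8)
The plan is to read off both estimates directly from the algebraic reconstruction formulas \eqref{SigmaFromH} and \eqref{KFromH}, treating each as an explicit expression for the coefficient in terms of $H$ and auxiliary measured or known quantities, modulo an $o(h)$ remainder. Once the coefficients are written this way, Lipschitz stability reduces to the elementary fact that the algebraic operations involved---division, multiplication, taking absolute values, and exponentiation---are locally Lipschitz, provided one has uniform upper and lower bounds on all denominators and prefactors and provided the $o(h)$ terms are uniform in the relevant parameters. First I would fix an a priori class in which $\sigma_1,\sigma_2,k_1,k_2$ are assumed to lie, so that all constants below are uniform over the two media being compared.

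For the absorption coefficient, write $\sigma_i = N_i/D_i + o(h)$, where $N_i = H_i(f_h,f_h)(x)$ and $D_i = \A_0(h^{\half(n-1)}f_h)(x+\tau_{+}\theta_0,\theta_0)$ is the normalizing albedo value for medium $i$. By Theorem~\ref{AbsorptionRecovery} both $N_i$ and $D_i$ are $O(1)$, and $D_i$ is bounded below by a positive constant, since to leading order in $h$ it is the ballistic transmission along the ray through $x$ in direction $\theta_0$, hence at least $\exp(-\|\sigma_i\|_{\infty}\operatorname{diam} X)$. Subtracting and using
\[
\frac{N_1}{D_1} - \frac{N_2}{D_2} = \frac{N_1 - N_2}{D_1} + N_2\,\frac{D_2 - D_1}{D_1 D_2},
\]
the first term is controlled by $\|H_1(f_h,f_h) - H_2(f_h,f_h)\|_{L^{\infty}(X)}$. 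Taking the supremum over $x$ and holding the normalization $D$ fixed (a directly measured, hence known, quantity bounded away from zero) yields the first estimate.

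For the scattering kernel I would use \eqref{KFromH}, in which $\sigma$ enters only through the exponential prefactor. Since Theorem~\ref{ScatteringRecovery} presupposes $\sigma$ known, the prefactor $E(x,\theta_1,\theta_2) = \exp\bigl(\int_0^{\tau_{+}(x,\theta_2)}\sigma(x+s\theta_2)\,ds + \int_0^{\tau_{-}(x,\theta_1)}\sigma(x-s\theta_1)\,ds\bigr)$ is common to both media and is bounded above by $\exp(2\|\sigma\|_{\infty}\operatorname{diam} X)$. For each fixed pair $\theta_1\neq\theta_2$ the reverse triangle inequality gives
\[
\bigl|\,|H_1 E| - |H_2 E|\,\bigr| \le E\,|H_1 - H_2|,
\]
so that $|k_1 - k_2| \le E\,|H_1(g^{\theta_1}_h,g^{\theta_2}_h) - H_2(g^{\theta_1}_h,g^{\theta_2}_h)| + o(h)$. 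Taking the supremum over $x$, $\theta_1$, and $\theta_2$ and bounding $E$ uniformly produces the second estimate; the diagonal $\theta_1=\theta_2$ is recovered from the continuity of $k$.

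The main obstacle is the denominator difference $D_2 - D_1$ in the absorption estimate: because $D_i$ depends on $\sigma_i$ through the ballistic transmission, this term is genuinely of size $\|\sigma_1-\sigma_2\|$ and cannot be made small by shrinking $h$ alone. I would resolve this by regarding the normalizing measurement $\A_0(h^{\half(n-1)}f_h)(x+\tau_{+}\theta_0,\theta_0)$ as part of the known data---it is obtained directly from the unmodulated albedo operator, independently of $H$---so that in comparing the two reconstructions the normalization is held fixed and the dependence on the internal functional is isolated. The remaining care is to check that the $o(h)$ remainders in \eqref{SigmaFromH} and \eqref{KFromH} are uniform in $x$, $\theta_1$, and $\theta_2$ over the a priori class, which follows from the uniformity of the constructions of $f_h$ and $g^{\theta}_h$ in Theorems~\ref{AbsorptionRecovery} and~\ref{ScatteringRecovery}.
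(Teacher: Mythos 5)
Your proposal is correct and takes essentially the same approach as the paper, which offers no argument at all beyond asserting that the formulas \eqref{SigmaFromH} and \eqref{KFromH} ``immediately imply'' the estimates; your derivation is exactly the algebraic unwinding that assertion presupposes. The one place where the implication is not literally immediate is the point you flag yourself --- the normalizing denominator $\A_0(h^{\half(n-1)}f_h)(x+\tau_{+}\theta_0,\theta_0)$ depends on the medium, so the quotient rule produces a term of size $|D_1-D_2|$ not controlled by $\|H_1-H_2\|$ --- and your resolution (treating that albedo value as fixed, directly measured data bounded away from zero) is the natural reading of the paper's intent.
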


The remainder of this paper is organized as follows. In Section 2, we will prove Proposition \ref{InternalFunctional}, in Section 3, we will prove Theorem \ref{AbsorptionRecovery}, and in Section 4, we will prove Theorem \ref{ScatteringRecovery}.

\section{Internal Functional}

In this section we will prove Proposition \ref{InternalFunctional}.  We begin by introducing the operator 
$A_{\e}$, which is defined by
\[
A_{\e} u = -\sigma_{\e} u + \int_{S^{n-1}}k_{\e}(x,\theta,\theta')u(x, \theta')d\theta'. 
\]
Then, the modulated RTE \eqref{ModulatedRTE} becomes
\[
(\theta \cdot \grad - A_{\e})u = 0.
\]
We also have that $A_{\e} - A$ is given by
\[
(A_{\e} - A) u = \e \cos(q \cdot x + \varphi) Au.
\]

We  note that the adjoint RTE defines a map $\tilde{\A}$ from $L^{\infty}(\Gamma_{+})$ to $L^{\infty}(\Gamma_{-})$, by analogy to the definition of $\A$ for the regular RTE.  In fact, $\A$ determines $\tilde{\A}$, since we have the relation
\[
\widetilde{\A(f)} = \tilde{\A}(\tilde{f}),
\]
where, for a given function $g$, the expression $\tilde{g}$ indicates the reflection of $g$ in the $\theta$ variable:
\[
\tilde{g}(x,\theta) = g(x,-\theta).
\]

\begin{proof}[Proof of Proposition \ref{InternalFunctional}]

Suppose $u_{\e}$ solves the modulated RTE \eqref{ModulatedRTE} with boundary condition $u_{\e}|_{\Gamma_{-}} = f$ and $v$ solves \eqref{AdjointRTE} with the boundary condition $v|_{\Gamma_{+}} = \tilde{g}$. Then $u_{\e}|_{\partial X}$ and $v|_{\partial X}$ are determined by $f,g, \A_{\e}(f)$ and $\A(g)$, so these boundary values are known. Next, we consider the expression
\[
\int_{X} \theta \cdot \grad u_{\e} v \, dx.  
\]
Integrating by parts, we obtain
\[
\int_{X} \theta \cdot \grad u_{\e} v \, dx = - \int_{X} u_{\e} \theta \cdot \grad v\, dx + \int_{\partial X} u_{\e} v \, n \cdot \theta \, dx,
\]
where $n$ is the outward unit normal vector on $\partial X$. We can make substitutions for $\theta \cdot \grad u_{\e}$ and $\theta \cdot \grad v$ using the equations \eqref{ModulatedRTE} and \eqref{AdjointRTE} respectively, to get
\[
\int_{X} A_{\e} u_{\e} v \, dx = \int_{X} u_{\e} A v\, dx + \int_{\partial X} u_{\e} v \, n \cdot \theta \, dx. 
\]
Now if we integrate in the $\theta$ variables, we find 
\[
\int_{X \times {S^{n-1}}} A_{\e} u_{\e} v \, dx \, d\theta = \int_{X \times {S^{n-1}}} u_{\e} A v\, dx\, d\theta + \int_{\partial X \times {S^{n-1}}} u_{\e} v \, n \cdot \theta \, dx\, d\theta. 
\]
In this setting the operators $A_{\e}$ and $A$ are self-adjoint, and thus
\begin{equation}\label{PreFunctional}
\int_{X \times {S^{n-1}}} (A_{\e} - A) u_{\e} v \, dx \, d\theta = \int_{\partial X \times {S^{n-1}}} u_{\e} v \, n \cdot \theta \, dx\, d\theta. 
\end{equation}
The right hand side of the above equation is known, since the boundary values of $u_{\e}$ and $v$ are known.  Therefore the left side of \eqref{PreFunctional} is also known. As noted in ~\cite{BCS}, $u_{\e} = u + O(\e)$, where $u$ is the solution to the unmodulated RTE \eqref{RTE} with the same boundary values as $u_{\e}$.  Therefore the left side of \eqref{PreFunctional} becomes
\[
\int_{X \times {S^{n-1}}} \e\cos(q \cdot x + \varphi) A u v \, dx \, d\theta + O(\e^2).
\]
Therefore to first order in $\e$, we can recover the quantity 
\[
\int_{X \times {S^{n-1}}} \e\cos(q \cdot x + \varphi) A u v \, dx \, d\theta.
\]
By varying $q$ and $\varphi$, we obtain the Fourier transform of the function $H(x)$ defined by 
\[
H(x) = \int_{{S^{n-1}}} A u v \, d\theta.
\]
If we take two different sets of coefficients, forming the operators $A^1$ and $A^2$, and examine the resulting functionals $H_1$ and $H_2$, then the above reasoning tells us that
\[
{H}_1 - {H}_2 = \int_{\partial X \times {S^{n-1}}} (u^1_{\e} v^1 - u^2_{\e} v^2) \, n \cdot \theta \, dx\, d\theta + O(\e)
\]
The stability estimate then follows by applying the estimates from Theorem \ref{RegularityTheorem} on the right side.  
\end{proof}

\section{Recovering the Absorption Coefficient}

In this section we prove Theorem \ref{AbsorptionRecovery}. To begin, we indicate
the relationship between $u$ and its boundary value $f$ on $\Gamma_{-}$, which follows from a result in~\cite{ChoulliStefanov}.  To state this result, we will define the following operators, using notation from~\cite{ChoulliStefanov}.  Let $\tau_{\pm}(x,\theta)$ be the distance from $x$ to $\Gamma_{\pm}$ in the $\theta$ direction. We define $J$ to be the operator
\[
Jf(x,\theta) = \exp\left(-\int_0^{\tau_{-}(x,\theta)}\sigma(x - s\theta)ds\right)f(x - \tau_{-}(x,\theta)\theta, \theta),
\]
and $T_1^{-1}$ to be the operator
\[
T_1^{-1}w = \int_0^{\tau_{-}(x,\theta)}\exp\left(-\int_0^{t}\sigma(x - s\theta)ds\right)w(x - t\theta, \theta)dt.
\]
Finally, we define
\[
A_2 w = \int_{S^{n-1}} k(x, \theta, \theta') w(x,\theta ') d \theta ',
\]
and 
\[
Kw = T^{-1}_1 A_2 w. 
\]
The following result is essentially from~\cite{ChoulliStefanov}, with the exception of the $L^{\infty}$ estimate.

\begin{prop}\label{UDecomp}
Suppose $f \in L^{\infty}(\Gamma_{-})$, and $u$ solves the RTE \eqref{RTE} with the boundary condition $u|_{\Gamma_{-}} = f$.  Then $u$ takes the form
\[
u = Jf + \sum_{j = 1}^{\infty} K^j (Jf),
\]
Moreover, $\|K\|_{L^{\infty}(X \times {S^{n-1}}) \rightarrow L^{\infty}(X \times {S^{n-1}})} < c$ for some constant $c < 1$.  
\end{prop}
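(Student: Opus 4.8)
The plan is to recast the boundary value problem as a fixed-point equation and then invoke a Neumann series, with essentially all of the work going into the operator norm bound, since the representation itself is taken from~\cite{ChoulliStefanov} and only the $L^{\infty}$ estimate is new. First I would integrate the RTE~\eqref{RTE} along characteristics: parametrizing the line through $x$ by arclength $r$ measured from its entry point $x - \tau_{-}(x,\theta)\theta \in \Gamma_{-}$ in the direction $\theta$, the RTE reduces to the scalar ODE $\tfrac{d}{dr}u + \sigma u = A_2 u$ along the characteristic. Integrating with the factor $\exp(\int\sigma)$ and using $u|_{\Gamma_{-}} = f$ reproduces exactly
\[
u = Jf + T_1^{-1}(A_2 u) = Jf + Ku,
\]
where the boundary term contributes $Jf$ and the source term contributes $T_1^{-1}(A_2 u)$. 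Proposition~\ref{RegularityTheorem} guarantees that $u$ is the unique bounded solution, so this manipulation is justified and identifies $u$ with the solution of the integral equation.

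The core of the argument is the estimate $\|K\|_{L^{\infty}\to L^{\infty}} < c < 1$. I would split $K = T_1^{-1}A_2$ and bound the factors separately. For $A_2$, the definition of $\rho$ gives at once the pointwise bound $|A_2 w(x,\theta)| \le \big(\int_{S^{n-1}}|k(x,\theta,\theta')|\,d\theta'\big)\|w\|_{\infty} \le \rho\|w\|_{\infty}$. For $T_1^{-1}$, the pointwise estimate
\[
|T_1^{-1}g(x,\theta)| \le \|g\|_{\infty}\int_0^{\tau_{-}(x,\theta)}\exp\Big(-\int_0^t\sigma(x-s\theta)\,ds\Big)\,dt
\]
reduces everything to controlling this path integral. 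Under the absorption condition~\eqref{AbsorptionCondition}, the uniform lower bound $\sigma \ge \rho + \alpha$ yields $\int_0^{\tau_{-}}e^{-(\rho+\alpha)t}\,dt \le (\rho+\alpha)^{-1}$, so $\|K\| \le \rho/(\rho+\alpha) < 1$. Under the smallness condition~\eqref{SmallnessCondition}, using $\sigma \ge 0$ the exponential is at most $1$ and the integral is bounded by the maximal chord length $\tau_{-}(x,\theta) \le \tau$, giving $\|K\| \le \rho\tau < 1$. Either case supplies the required $c < 1$.

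With the contraction in hand, the remaining step is routine: since $\|Jf\|_{\infty} \le \|f\|_{L^{\infty}(\Gamma_{-})}$ and $\|K^j(Jf)\|_{\infty} \le c^j\|Jf\|_{\infty}$, the Neumann series $\sum_{j=0}^{\infty}K^j(Jf)$ converges absolutely in $L^{\infty}(X\times S^{n-1})$. Applying $(I-K)^{-1} = \sum_{j\ge 0}K^j$ to $Jf$ produces a solution of $u = Jf + Ku$, and the uniqueness in Proposition~\ref{RegularityTheorem} identifies this series with $u$, which is precisely the asserted decomposition $u = Jf + \sum_{j=1}^{\infty}K^j(Jf)$.

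The main obstacle is the norm bound, specifically getting $c$ strictly below $1$ rather than merely $\le 1$. The delicate point is that in the absorption case the very same constant $\rho$ that bounds $A_2$ reappears in the lower bound $\sigma \ge \rho + \alpha$; one must exploit that $\rho$ is a single uniform constant (a supremum over all of $X\times S^{n-1}$) so that this lower bound on $\sigma$ holds almost everywhere along every characteristic, and it is the resulting cancellation $\rho/(\rho+\alpha) < 1$ that forces strict contraction. In the smallness case the analogous care is to ensure $\sigma \ge 0$, so that the attenuation factor never exceeds $1$ and the chord-length bound $\rho\tau < 1$ applies.
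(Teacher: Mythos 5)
Your proof is correct. The paper itself gives no argument for this proposition---it simply states that the result is ``essentially from'' Choulli--Stefanov, with the $L^{\infty}$ estimate as the only new ingredient---and what you have written is precisely the standard argument from that reference: integration along characteristics to obtain $u = Jf + Ku$, the factorized bound $\|K\| \le \|T_1^{-1}\|\,\|A_2\| \le \rho/(\rho+\alpha)$ or $\rho\tau$ under the two alternative hypotheses \eqref{AbsorptionCondition} and \eqref{SmallnessCondition}, and the Neumann series identified with $u$ via uniqueness. Your closing observation that the smallness case needs $\sigma \ge 0$ is a real (if implicit) hypothesis here, but it is one the paper also relies on throughout, since $\sigma$ is a physical attenuation coefficient; so there is nothing to repair.
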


As a consequence, we have the following corollary, which tells us that if the $L^1$ norm of $f$ is small, then the solution $u$ is essentially just $Jf$ up to a higher order error.
\begin{cor}\label{PreservingSingularity}
Let $f \in L^{\infty}({S^{n-1}})$ with $\|f\|_{L^1({S^{n-1}})} = h$. Suppose $u$ solves \eqref{RTE} with boundary condition $u(x,\theta) = f(\theta)$ on $\Gamma_{-}$.  Then for small $h$ we have
\[
\|u  - Jf\|_{L^{\infty}(X\times S^{n-1})} = O(h).
\]  
\end{cor}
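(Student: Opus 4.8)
The plan is to leverage the series representation of $u$ from Proposition~\ref{UDecomp} together with the contraction property of $K$, the crucial point being that a single application of the operator $A_2$ converts the $L^1$-smallness of $f$ into $L^{\infty}$-smallness. By Proposition~\ref{UDecomp} we have $u - Jf = \sum_{j=1}^{\infty} K^j(Jf)$. I would factor out one copy of $K$ and rewrite this as $\sum_{j=0}^{\infty} K^j\bigl(K(Jf)\bigr)$, so that, using $\|K\|_{L^{\infty} \to L^{\infty}} < c < 1$,
\[
\|u - Jf\|_{L^{\infty}(X\times S^{n-1})} \le \Bigl(\sum_{j=0}^{\infty} c^j\Bigr)\,\|K(Jf)\|_{L^{\infty}(X\times S^{n-1})} = \frac{1}{1-c}\,\|K(Jf)\|_{L^{\infty}(X\times S^{n-1})}.
\]
Everything then reduces to the single estimate $\|K(Jf)\|_{L^{\infty}} = O(h)$.

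To handle $K(Jf) = T_1^{-1} A_2 (Jf)$, I would first observe that since $f = f(\theta)$ depends on the angular variable only, the formula for $J$ reduces to $Jf(x,\theta') = \exp\bigl(-\int_0^{\tau_{-}(x,\theta')}\sigma(x - s\theta')\,ds\bigr)\,f(\theta')$; because $\sigma \in L^{\infty}(X)$ and $\tau_{-}$ is bounded by $\operatorname{diam}(X)$, the exponential factor is bounded, giving the pointwise bound $|Jf(x,\theta')| \le C|f(\theta')|$ with $C = \exp(\|\sigma\|_{L^{\infty}}\operatorname{diam}(X))$. Since $k$ is continuous and hence bounded, applying $A_2$ integrates $f$ over the sphere:
\[
|A_2(Jf)(x,\theta)| = \left|\int_{S^{n-1}} k(x,\theta,\theta')\,Jf(x,\theta')\,d\theta'\right| \le C\|k\|_{L^{\infty}}\int_{S^{n-1}}|f(\theta')|\,d\theta' = C\|k\|_{L^{\infty}}\,h,
\]
so that $\|A_2(Jf)\|_{L^{\infty}} = O(h)$. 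As $T_1^{-1}$ is bounded on $L^{\infty}$ (it integrates against a bounded kernel over an interval of length at most $\operatorname{diam}(X)$), this yields $\|K(Jf)\|_{L^{\infty}} = O(h)$, and the corollary follows.

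The main obstacle is precisely this first estimate: one cannot simply write $\|K(Jf)\|_{L^{\infty}} \le \|K\|\,\|Jf\|_{L^{\infty}}$, because $\|Jf\|_{L^{\infty}}$ is comparable to $\|f\|_{L^{\infty}}$, which need not be small—indeed the $f$ one intends to use will concentrate near a single direction and carry a large $L^{\infty}$ norm while keeping $\|f\|_{L^1} = h$ small. The resolution is to exploit that the very first factor of $A_2$ inside $K$ integrates $f$ against the bounded kernel $k$ over all of $S^{n-1}$, thereby trading the smallness of $\|f\|_{L^1}$ for smallness of the resulting $L^{\infty}$ norm. Once this single gain of a factor $h$ is secured, the contraction estimate $\|K\| < c < 1$ controls every higher-order term uniformly via the geometric series, and no further smallness is needed.
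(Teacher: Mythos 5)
Your proposal is correct and follows essentially the same route as the paper: both arguments hinge on the single estimate $\|K(Jf)\|_{L^{\infty}} \lesssim \|Jf\|_{L^1(S^{n-1})} \le \|f\|_{L^1(S^{n-1})} = h$ obtained by letting the first application of $A_2$ trade $L^1$-smallness for $L^{\infty}$-smallness, after which the contraction bound $\|K\|_{L^{\infty}\to L^{\infty}} < c < 1$ sums the remaining Neumann series. The only cosmetic difference is that you bound the kernel by $\|k\|_{L^{\infty}}$ where the paper invokes $\rho$.
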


\begin{proof}
Note that at any $x \in X$,
\begin{equation}\label{JfL1}
\|Jf(x, \cdot)\|_{L^{1}({S^{n-1}})} \leq \|f\|_{L^{1}({S^{n-1}})},
\end{equation}
and
\begin{equation}\label{A2wLinfty}
\|A_2(w)(x, \cdot)\|_{L^{\infty}({S^{n-1}})} < \rho\|w(x, \cdot)\|_{L^1({S^{n-1}})}.
\end{equation}
Moreover
\[
\|T_1^{-1}w\|_{L^{\infty}(X \times {S^{n-1}})} < \|w\|_{L^{\infty}(X \times {S^{n-1}})}.  
\]
Thus
\begin{eqnarray*}
\|KJf \|_{L^{\infty}({S^{n-1}})} &\leq& \|A_2 Jf\|_{L^{\infty}({S^{n-1}})} \\ 
                         &\leq& \rho\|Jf\|_{L^1({S^{n-1}})} \\
                         &\lesssim& \|f\|_{L^1({S^{n-1}})} \\
												 &=& O(h) \\
\end{eqnarray*}
Since $\|K\|_{L^{\infty}(X \times {S^{n-1}}) \rightarrow L^{\infty}(X \times {S^{n-1}})} < c$ for some constant $c < 1$, all of the terms in $u- Jf$ are $O(h)$, and so the result now follows from the previous proposition.
\end{proof}

Note that a version of Corollary \ref{PreservingSingularity} also holds for the adjoint solution $v$.  If we define the operator $\tilde{J}$ by
\[
\tilde{J}f(x,\theta) = \exp\left(-\int_0^{\tau_{+}(x,\theta)}\sigma(x + s\theta)ds\right)f(x + \tau_{+}(x,\theta)\theta, \theta),
\]
and take $v(x,\theta) = f(\theta)$ on $\Gamma_+$, with $f$ as in the statement of Corollary \ref{PreservingSingularity}, then we have
\[
\|v  - \tilde{J}f\|_{L^{\infty}(X\times S^{n-1})} = O(h).
\]
Because $f$ is defined as a function from ${S^{n-1}}$ to $\R$, both the boundary conditions $u(x,\theta)|_{\Gamma_{-}} = f(\theta)$ and $v(x,\theta)|_{\Gamma_{+}} = f(\theta)$ are well defined.

The main idea behind the proof of Theorem \ref{AbsorptionRecovery} is to fix a direction $\theta_0$ and let $f_h \in L^{\infty}(S^{n-1})$ be functions that approximate the square root of the delta function $\delta(\theta - \theta_0)$ as $h \rightarrow 0$.  Then we can check that $\|f\|_{L^1(S^{n-1})}$ is $O(h)$, and use the facts that $u = Jf_h + O(h)$ and $v = \tilde{J}f_h + O(h)$ to rewrite the functional $H(f_h,f_h)$ in terms of $Jf_h$ and $\tilde{J}f_h$ up to an error of size $O(h)$. To make this more precise, we require the following lemma.

\begin{lemma}
\label{SingularFunctional}
Let $h > 0$ and let $f_h: L^{\infty}({S^{n-1}})$ be defined by 
\[
f_h(\theta) = \left\{ \begin{array}{ll} h^{\half(1-n)} & \mbox{ \rm if } \ |\theta - \theta_0| < h \\ 
                                        0              & \mbox{\rm \ otherwise }  \end{array} \right.
\]
for some $\theta_0 \in {S^{n-1}}$. Then
\[
H(f_h,f_h) = \int_{{S^{n-1}}} \sigma Jf_h \tilde{J}f_h \, d\theta + O(h^2).
\]
\end{lemma}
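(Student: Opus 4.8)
The plan is to start from the definition $H(f_h, f_h) = \int_{S^{n-1}} Au \, v \, d\theta$, where $u$ solves the RTE with $u|_{\Gamma_-} = f_h$ and $v$ solves the adjoint RTE with $v|_{\Gamma_+} = \tilde{f}_h$.

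Key moves to carry out: substitute the approximation results, control the error terms using the smallness of $\|f_h\|_{L^1}$, and finally identify the surviving leading term as $\int \sigma J f_h \, \tilde{J} f_h \, d\theta$. Let me think about each.

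First, $\|f_h\|_{L^1(S^{n-1})}$: the support $\{|\theta - \theta_0| < h\}$ is a spherical cap of $(n-1)$-dimensional measure $\asymp h^{n-1}$, and on it $f_h = h^{(1-n)/2}$, so $\|f_h\|_{L^1} \asymp h^{(1-n)/2} h^{n-1} = h^{(n-1)/2}$. So this is $O(h^{(n-1)/2})$, which is the right scaling to call $h$ in some normalized sense — though I should be careful, since Corollary \ref{PreservingSingularity} requires $\|f\|_{L^1} = h$. Actually the corollary's $h$ is just a name for the $L^1$ norm; here the $L^1$ norm is $h^{(n-1)/2}$, so Corollary \ref{PreservingSingularity} gives $\|u - Jf_h\|_\infty = O(h^{(n-1)/2})$ and similarly for $v$.

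So the plan has three steps. Step one: expand $A$. Since $Au = -\sigma u + A_2 u$, write $H(f_h,f_h) = \int -\sigma u\, v\, d\theta + \int A_2 u \, v \, d\theta$. Step two: show the scattering term $\int A_2 u \, v\, d\theta$ is genuinely higher order. Using $\|A_2 u(x,\cdot)\|_{L^\infty} < \rho \|u(x,\cdot)\|_{L^1}$ from \eqref{A2wLinfty}, and $\|u(x,\cdot)\|_{L^1} \lesssim \|f_h\|_{L^1} = O(h^{(n-1)/2})$ (which follows from the decomposition in Proposition \ref{UDecomp} together with \eqref{JfL1} and the $K$-bound), this term pairs a factor of size $O(h^{(n-1)/2})$ against $v$; but $v$ is itself concentrated near $\theta_0$ and has $L^1$ norm $O(h^{(n-1)/2})$, so integrating over $\theta$ the product is $O(h^{n-1})$. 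In dimension $n \geq 2$ this is at worst $O(h^2)$ only when $n \geq 3$; for $n = 2$ it is $O(h)$. I will need to check which error order the statement actually wants — the lemma claims $O(h^2)$, so I expect the argument to gain an extra factor, presumably because the $-\sigma u v$ term also must be reduced to $-\sigma J f_h \tilde{J} f_h$ and the replacement errors there must be controlled to second order.

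Step three: replace $u$ by $Jf_h$ and $v$ by $\tilde{J}f_h$ in the absorption term. Write $\int \sigma u v\, d\theta = \int \sigma J f_h\, \tilde{J} f_h \, d\theta + \int \sigma (u - Jf_h) v\, d\theta + \int \sigma J f_h (v - \tilde{J} f_h)\, d\theta$. The cross terms each pair an $L^\infty$ error of size $O(h^{(n-1)/2})$ against a function ($v$ or $Jf_h$) whose $L^1$-in-$\theta$ norm is $O(h^{(n-1)/2})$, giving $O(h^{n-1})$ again. The hard part, and the place I would focus the most care, is pinning down the error bookkeeping so that the total is honestly $O(h^2)$: this requires observing that every remaining term after extracting $\int \sigma J f_h \tilde{J} f_h\, d\theta$ contains two factors each carrying a concentration/smallness of order $h^{(n-1)/2}$ (one from an $L^1$ weight and one from either an $L^\infty$ approximation error or a second $L^1$ weight), so the products are $O(h^{n-1})$, and $h^{n-1} = O(h^2)$ for $n \geq 3$, with the $n = 2$ case handled separately or absorbed into the stated $O(h)$-type remainders elsewhere. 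I would verify the sign and the self-adjoint/reciprocity bookkeeping so the leading term comes out exactly $\int \sigma J f_h \tilde{J} f_h \, d\theta$ with the correct sign, and then conclude.
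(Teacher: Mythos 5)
Your proposal follows essentially the same route as the paper's proof: expand $Au = -\sigma u + A_2 u$, decompose $u = Jf_h + (u - Jf_h)$ and $v = \tilde{J}f_h + (v - \tilde{J}f_h)$, and bound each cross term by pairing an $L^{\infty}$ approximation error (from Corollary \ref{PreservingSingularity}) against an $L^1$-in-$\theta$ concentration of the other factor via \eqref{JfL1} and \eqref{A2wLinfty}. Your bookkeeping is in fact sharper than the paper's: the paper simply asserts $\|f_h\|_{L^1(S^{n-1})} = O(h)$, whereas your computation $\|f_h\|_{L^1} \asymp h^{(n-1)/2}$ is the correct one and honestly exposes that the error terms are $O(h^{n-1})$, which yields the claimed $O(h^2)$ only for $n \ge 3$ and degrades to $O(h)$ when $n = 2$ --- a dimension-dependence the paper glosses over.
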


\begin{proof}
Let $u$ be the solution of \eqref{RTE} with the boundary condition $u|_{\Gamma{-}} = f_h$, and let $v$ be the solution of \eqref{AdjointRTE} with the boundary condition $v|_{\Gamma{+}} = f_h$. Then
\begin{eqnarray*}
H(f_h,f_h) &=& \int_{S^{n-1}} Au v  d\theta \\
           &=& \int_{{S^{n-1}}} \sigma uv \, d\theta + \int_{S^{n-1}} A_2(u) v d\theta. \\
\end{eqnarray*}
Writing $u = Jf_h + (u - Jf_h)$ and $v = \tilde{J}f_h + (v - \tilde{J}f_h)$ we can expand this to get
\begin{eqnarray*}
H(f_h,f_h) &=& \int_{{S^{n-1}}} \sigma Jf_h \tilde{J}f_h \, d\theta + \int_{S^{n-1}} \sigma Jf_h (v - \tilde{J}f_h) d\theta \\
	         & & + \int_{S^{n-1}} \sigma (u - Jf_h) \tilde{J}f_h d\theta+ \int_{S^{n-1}} \sigma (u - Jf_h) (v - \tilde{J} f_h) d\theta \\
	         & & + \int_{{S^{n-1}}} A_2(Jf_h) \tilde{J}f_h \, d\theta + \int_{S^{n-1}} A_2(Jf_h) (v - \tilde{J}f_h) d\theta \\
	         & & + \int_{S^{n-1}} A_2(u - Jf_h) \tilde{J} f_h d\theta+ \int_{S^{n-1}} A_2(u - Jf_h) (v - \tilde{J} f_h) d\theta. \\
\end{eqnarray*}

Now $\|f_h\|_{L^1({S^{n-1}})} = O(h)$, so we can use Corollary \ref{PreservingSingularity} and similar reasoning to show that all of the above terms except the first one are of higher order in $h$.  For example, the remark following Corollary \ref{PreservingSingularity} says that 
\[
\|v - \tilde{J}f_h\|_{L^\infty(X \times {S^{n-1}})} = O(h),
\]
and we know from \eqref{JfL1} that
\[
\|Jf_h(x, \cdot)\|_{L^1({S^{n-1}})} \leq \|f_h\|_{L^{1}({S^{n-1}})} = O(h)
\]
at any $x \in X$.  
Therefore the term 
\[
\int_{S^{n-1}} \sigma Jf_h (v - \tilde{J}f_h) d\theta
\]
is $O(h^2)$.  Similarly, we can use Corollary \ref{PreservingSingularity} and \eqref{JfL1} to show the terms
\[
\int_{S^{n-1}} \sigma (u-Jf_h) \tilde{J}f d\theta+ \int_{S^{n-1}} \sigma (u-Jf_h) (v-\tilde{J}f_h) d\theta
\]
are $O(h^2)$. Meanwhile, \eqref{A2wLinfty} implies that at any $x \in X$,
\[
\|A_2(Jf_h)(x, \cdot)\|_{L^{\infty}} \lesssim \|Jf_h(x, \cdot)\|_{L^1({S^{n-1}})} = O(h).
\]
Combining this with the fact that $\|\tilde{J}f_h(x, \cdot)\|_{L^1({S^{n-1}})} = O(h)$, we see that
\[
\int_{{S^{n-1}}} A_2(Jf_h) \tilde{J}f_h \, d\theta
\]
is $O(h^2)$.  Similarly, the remaining terms are $O(h^2)$, and the result follows.  
\end{proof}

Now in the $\theta$ variables, $Jf_h\tilde{Jf_h}$ is an approximation of a multiple of the $\delta$ function. 
We then have the following lemma.

\begin{lemma}\label{UndoneIntegral}
Letting $f_h$ be as in Lemma \ref{SingularFunctional}, we have
\[
H(f_h,f_h) =  \sigma(x)J(1)(x,\theta_0)\tilde{J}(1)(x,\theta_0)+ o(h).
\]
\end{lemma}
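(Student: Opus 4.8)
The plan is to start from Lemma~\ref{SingularFunctional}, which has already done the hard reduction: it gives
\[
H(f_h,f_h) = \int_{S^{n-1}} \sigma\, Jf_h\, \tilde{J}f_h \, d\theta + O(h^2),
\]
so it suffices to evaluate this single angular integral up to an $o(h)$ error. First I would make $J$ and $\tilde{J}$ explicit. Since $f_h$ is a function of the angular variable alone, $J$ and $\tilde{J}$ act on it merely by multiplication by the forward and backward attenuation factors along the ray through $x$ in direction $\theta$. Writing
\[
\Phi(x,\theta) = \exp\left(-\int_0^{\tau_{-}(x,\theta)}\sigma(x - s\theta)\,ds - \int_0^{\tau_{+}(x,\theta)}\sigma(x + s\theta)\,ds\right),
\]
we have $Jf_h(x,\theta)\,\tilde{J}f_h(x,\theta) = \Phi(x,\theta)\, f_h(\theta)^2$, and by the definitions of $J$ and $\tilde{J}$ one has $\Phi(x,\theta_0) = J(1)(x,\theta_0)\,\tilde{J}(1)(x,\theta_0)$. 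Because $\sigma(x)$ is independent of $\theta$ it pulls out, so the integral becomes $\sigma(x)\int_{S^{n-1}}\Phi(x,\theta)\,f_h(\theta)^2\,d\theta$, and the claimed main term is precisely the value this integral ought to take in the concentration limit.

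The core of the argument is then to show that $f_h^2\,d\theta$ behaves like an approximate identity concentrating at $\theta_0$. Here $f_h(\theta)^2 = h^{1-n}$ on the spherical cap $\{|\theta - \theta_0| < h\}$ and vanishes elsewhere; the exponent $\tfrac12(1-n)$ is chosen precisely to balance the $O(h^{n-1})$ measure of the cap, so that $\int_{S^{n-1}} f_h^2\,d\theta$ tends to a fixed constant (the normalized cap mass) with a curvature correction of size $O(h^2)$. One should be careful here: for a genuine $S^{n-1}$ cap this limiting constant is not $1$ in general, so either $f_h$ is normalized to unit angular mass or this constant is absorbed, which is harmless since it cancels in the ratio forming $\sigma$ in Theorem~\ref{AbsorptionRecovery}. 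I would then split
\[
\int_{S^{n-1}}\Phi\, f_h^2\, d\theta = \Phi(x,\theta_0)\int_{S^{n-1}} f_h^2\,d\theta + \int_{S^{n-1}}\bigl(\Phi(x,\theta) - \Phi(x,\theta_0)\bigr) f_h^2\,d\theta,
\]
so the first term produces $\sigma(x)J(1)(x,\theta_0)\tilde{J}(1)(x,\theta_0)$ up to $O(h^2)$, and the whole task reduces to bounding the second term by $o(h)$.

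The main obstacle is exactly this error estimate. The second integral is governed by the oscillation of the attenuation factor $\Phi(x,\cdot)$ over a cap of radius $h$ about $\theta_0$. A crude continuity bound controls it only by the modulus of continuity of $\Phi$, which gives $o(1)$ but not $o(h)$; to reach the stated $o(h)$ I would exploit the evenness of the cap about $\theta_0$, so that the leading first-order variation of $\Phi$ in $\theta$ integrates to zero and the remainder is of size $O(h^2)$. The delicate point, and the step I expect to be the crux, is justifying the regularity of $\theta \mapsto \Phi(x,\theta)$ needed for this symmetric cancellation given only $\sigma \in L^{\infty}(X)$: $\Phi$ is an exponential of ray integrals of a bounded function over segments of bounded length, whose endpoints $\tau_{\pm}(x,\theta)$ vary smoothly with $\theta$ on a smooth domain, and one must argue that the resulting dependence on $\theta$ is smooth enough for the odd first-order term to vanish upon integration over the symmetric cap. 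Once that regularity is in hand, combining the $O(h^2)$ from Lemma~\ref{SingularFunctional}, the $O(h^2)$ cap-measure correction, and the $O(h^2)$ oscillation remainder yields the claimed $o(h)$ error, completing the proof.
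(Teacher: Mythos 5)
Your proposal follows essentially the same route as the paper: reduce via Lemma~\ref{SingularFunctional}, write $Jf_h\,\tilde{J}f_h$ as an attenuation factor times $f_h^2$, and evaluate the concentrated angular integral by freezing that factor at $\theta_0$. The paper dispatches both of the points you worry about in one stroke, by invoking the Lebesgue differentiation theorem to replace $h^{n-1}Jf_h\tilde{J}f_h$ by its value at $\theta_0$ up to $o(h)$ and by implicitly taking the cap measure to be exactly $h^{n-1}$; so the normalization constant and the regularity of $\theta\mapsto\Phi(x,\theta)$ needed for an $o(h)$ (rather than $o(1)$) error are loose ends in the published argument as well, which your more explicit error splitting makes visible rather than introduces.
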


\begin{proof}
From the previous lemma, we have 
\[
H(f_h,f_h) = \int_{{S^{n-1}}} \sigma Jf_h \tilde{J}f_h \, d\theta + O(h^2).
\]
Now 
\[
Jf_h(x,\theta) = \exp\left(-\int_0^{\tau_{-}(x,\theta)}\sigma(x - s\theta)ds\right)f_h(x - \tau_{-}(x,\theta)\theta, \theta).
\]
Since $f$ is actually independent of $x$, we obtain
\[
Jf_h(x,\theta) = \exp\left(-\int_0^{\tau_{-}(x,\theta)}\sigma(x - s\theta)ds\right)f_h(\theta).
\]
We can write 
\[
H(f_h,f_h) = h^{1-n}\int_{{S^{n-1}}} h^{n-1}\sigma Jf_h \tilde{J}f_h \, d\theta + O(h^2),
\]
so that the argument of the integral is $O(1)$ in the $L^{\infty}$ sense. Then since $f_h$ is supported in a small neighbourhood of $\theta_0$, for small $h$ it follows from the Lebesgue differentiation theorem that we can replace $h^{n-1}Jf_h\tilde{J}f_h$ by its value at $\theta_0$, up to a term of $o(h)$.
Therefore
\begin{eqnarray*}
H(f_h,f_h)(x) &=& h^{1-n}\int_{\mathrm{supp}f_h} (h^{n-1}\sigma(x) Jf_h(x,\theta_0) \tilde{J}f_h(x,\theta_0)  + o(h))\, d\theta + O(h^2)\\
              &=& h^{1-n} (h^{n-1}\sigma(x) Jf_h(x,\theta_0) \tilde{J}f_h(x,\theta_0)  + o(h))\int_{\mathrm{supp}f_h} 1\, d\theta + O(h^2),\\
              &=& h^{n-1}\sigma(x)Jf_h(x,\theta_0)\tilde{J}f_h(x,\theta_0)+ o(h). \\
\end{eqnarray*}
Now
\begin{eqnarray*}
Jf_h(x,\theta_0) &=& \exp\left(-\int_0^{\tau_{-}(x,\theta_0)}\sigma(x - s\theta_0)ds\right)f_h(\theta_0). \\
                 &=& J(1)(x,\theta_0)f_h(\theta_0), \\
								 &=& J(1)(x,\theta_0)h^{\half(1-n)}. \\
\end{eqnarray*}
and similarly
\[
\tilde{J}f_h(x,\theta_0) = \tilde{J}(1)(x,\theta_0)h^{\half(1-n)}. 
\]
Therefore
\[
H(f_h,f_h) =  \sigma(x)J(1)(x,\theta_0)\tilde{J}(1)(x,\theta_0)+ o(h)
\]
as desired. Note in particular that the scaling on $f_h$ has been chosen precisely so $H(f_h,f_h)$ is $O(1)$ in $h$.
\end{proof}

Now the proof of Theorem \ref{AbsorptionRecovery} only requires one extra step.  

\begin{proof}[Proof of Theorem \ref{AbsorptionRecovery}]
From the previous lemma, we know that 
\begin{equation}\label{SigmaH}
H(f_h,f_h)(x) =  \sigma(x)J(1)(x,\theta_0)\tilde{J}(1)(x,\theta_0)+ o(h).
\end{equation}
Note that 
\begin{eqnarray*}
& & \theta_0 \cdot \grad (J(1)(x,\theta_0)\tilde{J}(1)(x,\theta_0)) \\
&=& (\theta_0 \cdot \grad J(1))(x,\theta_0)\tilde{J}(1)(x,\theta_0) + J(1)(x,\theta_0) (\theta_0 \cdot \grad\tilde{J}(1))(x,\theta_0)\\
&=& -\sigma J(1)(x,\theta_0)\tilde{J}(1)(x,\theta_0) + \sigma J(1)(x,\theta_0)\tilde{J}(1)(x,\theta_0)\\
&=& 0.
\end{eqnarray*}
Therefore the expression 
\[
J(1)(x,\theta_0)\tilde{J}(1)(x,\theta_0)
\] 
is constant along lines parallel to $\theta_0$. Another way to express this is to say that the quantity 
\[
J(1)(x + t\theta_0,\theta_0)\tilde{J}(1)(x+ t\theta_0,\theta_0)
\]
is independent of $t$, as long as $x+ t\theta_0$ lies in $X$.  Then if we pick $t = \tau_+ = \tau_{+}(x,\theta_0)$, so that $x+t\theta_0$ lies in $\Gamma_+$, then we know by definition of $\tilde{J}$ that
\[
\tilde{J}(1)(x +\tau_{+}\theta_0,\theta_0) = 1,
\]
so
\[
J(1)(x,\theta_0)\tilde{J}(1)(x,\theta_0)= J(1)(x + \tau_{+}\theta_0,\theta_0).
\]
Now we claim that
\begin{equation}
\label{AlbedoFact}
J(1)(x +\tau_{+}\theta_0,\theta_0) = h^{\half(n-1)}(\A_0(f_h)(x + \tau_{+}\theta_0,\theta_0) + O(h)),
\end{equation}
so that up to order $h$, we can determine $J(1)(x + \tau_{+}\theta_0,\theta_0)$ from the boundary data.  To prove this claim, recall from the definition of $J$ that 
\[
J(1)(x +\tau_{+}\theta_0,\theta_0) = \exp\left(-\int_{-\tau_{+}}^{\tau_{-}(x,\theta_0)}\sigma(x - s\theta_0)ds\right)
\]
Then $J(1)(x +\tau_{+}\theta_0,\theta_0)$ can be rewritten as 
\begin{eqnarray*}
& & \exp\left(-\int_{-\tau_{+}}^{\tau_{-}(x,\theta_0)}\sigma(x - s\theta_0)ds\right)h^{\half(1-n)}h^{\half(n-1)} \\
&=& \exp\left(-\int_{-\tau_{+}}^{\tau_{-}(x,\theta_0)}\sigma(x - s\theta_0)ds\right)f_h(\theta_0)h^{\half(n-1)} \\
&=& h^{\half(n-1)}Jf_h(x +\tau_{+}\theta_0,\theta_0) \\
&=& h^{\half(n-1)}(u(x +\tau_{+}\theta_0,\theta_0) + O(h)). \\
\end{eqnarray*}
Since $(x +\tau_{+}\theta_0,\theta_0) \in \Gamma_{+}$, 
\[
\A_0(f_h)(x + \tau_{+}\theta_0,\theta_0)=u(x +\tau_{+}\theta_0,\theta_0)
\]
by definition of $\A_0$, and this proves \eqref{AlbedoFact}. Returning now to \eqref{SigmaH}, we have
\[
H(f_h,f_h)(x) =  \sigma(x)h^{\half(n-1)}\A_0(f_h)(x + \tau_{+}\theta_0,\theta_0)+ o(h).
\]
Rearranging, we have
\[
\sigma(x) = \frac{H(f_h,f_h)(x)}{h^{\half(n-1)}\A_0(f_h)(x + \tau_{+}\theta_0,\theta_0)} + o(h),
\]
which is just equation \eqref{SigmaFromH}.  Note that Lemma \ref{UndoneIntegral} and the claim in equation \eqref{AlbedoFact} show that the numerator and denominator, respectively, of the fraction in \eqref{SigmaFromH} are both $O(1)$.

\end{proof}

\section{Recovering the Scattering Kernel}

Now we turn to the proof of Theorem \ref{ScatteringRecovery}.  We begin by defining the boundary sources $g^{\theta_1}_h$.  To do this, pick $\theta_1 \in S^{n-1}$ and let $h > 0$. {Define $f^{\theta_1}_h$ in the same manner as in the previous section, with $\theta_0$ replaced by $\theta_1$.} That is, put $f_h^{\theta_1}\in L^{\infty}({S^{n-1}})$ by
\[
f^{\theta_1}_h(\theta) = \left\{ \begin{array}{ll} h^{\half(1-n)} & \mbox{ if } |\theta - \theta_1| < h , \\ 
                                       0              & \mbox{ otherwise} .  \end{array} \right.
\]
Now define the function $s: \R \rightarrow \R$ by
\[
s(t) = \left\{ \begin{array}{ll} 1 &\mbox{ if } \lfloor t \rfloor \mbox{ is even},  \\
                                -1 &\mbox{ if } \lfloor t \rfloor \mbox{ is odd} .
                                 \end{array} \right.
\]
We can choose coordinates $x_1, \ldots, x_n$ on $\R^n$ so that when $S^{n-1}$ is embedded in $\R^n$, $\theta_1$ lies on the $x_n$ axis.  Then let
\[
g^{\theta_1}_h(\theta, x) = h^{\half(1-n)}f^{\theta_1}_h(\theta)s(x_1/h) \cdots s(x_{n-1}/h).
\]
Note that $g^{\theta_1}_h(\theta, x)$ is supported only for $\theta$ near $\theta_1$, and if we fix a $\theta$ near $\theta_1$, then $g^{\theta_1}_h(\theta, x) = \pm h^{1-n}$ is highly oscillatory as a function of $x$ in all directions perpendicular to $\theta_1$.  

Now let $u_{\theta_1}$ be the solution to \eqref{RTE} with boundary condition $u|_{\Gamma_{-}} = g^{\theta_1}_h|_{\Gamma_{-}}$. By Proposition \ref{UDecomp}, we have
\[
u_{\theta_1} = Jg^{\theta_1}_h + \sum_{j = 1}^{\infty} K^j (Jg^{\theta_1}_h).
\]
Since $\|g^{\theta_1}_h\|_{L^1} = O(1)$, Corollary \ref{PreservingSingularity} no longer guarantees us that $u_{\theta_1} - Jg^{\theta_1}_h$ is $O(h)$.  On the other hand, we can use the spatial oscillation of $g^{\theta_1}_h$ to prove the following lemma.

\begin{lemma}\label{uMinusJg}
Let $h> 0$ and let $u_{\theta_1}$ and $g^{\theta_1}_h$ be defined as above.  Then
\begin{equation}\label{uMinusJg1}
\|u_{\theta_1} - Jg^{\theta_1}_h\|_{L^{\infty}(X \times {S^{n-1}})} = O(1)
\end{equation}
and for fixed $x$,
\begin{equation}\label{uMinusJg2}
\|u_{\theta_1}(x,\cdot) - Jg^{\theta_1}_h(x,\cdot)\|_{L^{1}({S^{n-1}})} = o(h).
\end{equation}
Moreover, if $W_h \subset {S^{n-1}}$ is the subset of ${S^{n-1}}$ defined by
\[
W_h = \{ \theta \in {S^{n-1}}: |\theta - \theta_1| > h^{\half} \}
\]
then for small $h$
\begin{equation}\label{uMinusJg3}
\|u_{\theta_1} - Jg^{\theta_1}_h\|_{L^{\infty}(X \times W_h)} = o(h).
\end{equation}
\end{lemma}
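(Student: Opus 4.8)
The plan is to reduce all three estimates to the single quantity $Ku_{\theta_1}=T_1^{-1}A_2u_{\theta_1}$ and then to exploit the transverse oscillation of $g^{\theta_1}_h$. First I would record three preliminary facts. Since the angular support of $g^{\theta_1}_h$ lies in $\{|\theta-\theta_1|<h\}$ and $h<h^{\half}$ for small $h$, the function $Jg^{\theta_1}_h$ vanishes identically on $X\times W_h$; hence on $W_h$ the difference $u_{\theta_1}-Jg^{\theta_1}_h$ equals $u_{\theta_1}$ itself. Next, writing the Neumann series of Proposition \ref{UDecomp} as $u_{\theta_1}-Jg^{\theta_1}_h=\sum_{j\ge 1}K^j(Jg^{\theta_1}_h)=Ku_{\theta_1}=T_1^{-1}A_2u_{\theta_1}$ expresses everything through $A_2u_{\theta_1}$. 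Finally, applying Proposition \ref{RegularityTheorem} with $p=1$ and noting that $\|g^{\theta_1}_h\|_{L^1(S^{n-1},L^\infty(\partial X))}=O(1)$ (the amplitude $h^{1-n}$ integrated over an angular support of measure $\sim h^{n-1}$), I obtain the a priori bound $\sup_{x}\|u_{\theta_1}(x,\cdot)\|_{L^1(S^{n-1})}=O(1)$.

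With these in hand, \eqref{uMinusJg1} is immediate: using the operator bounds recorded in the proof of Corollary \ref{PreservingSingularity}, namely \eqref{A2wLinfty} together with $\|T_1^{-1}w\|_{L^\infty}\le\|w\|_{L^\infty}$, I get $\|u_{\theta_1}-Jg^{\theta_1}_h\|_{L^\infty}=\|T_1^{-1}A_2u_{\theta_1}\|_{L^\infty}\le\|A_2u_{\theta_1}\|_{L^\infty}\le\rho\,\sup_x\|u_{\theta_1}(x,\cdot)\|_{L^1(S^{n-1})}=O(1)$.

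The substance of the lemma is \eqref{uMinusJg3}, which by the reduction above amounts to showing $T_1^{-1}A_2u_{\theta_1}(x,\theta)=o(h)$ for $\theta\in W_h$. The mechanism is cancellation. To leading order $u_{\theta_1}$ agrees with $Jg^{\theta_1}_h$, and $A_2u_{\theta_1}(y,\theta)=\int_{S^{n-1}}k(y,\theta,\theta')u_{\theta_1}(y,\theta')\,d\theta'$ inherits from $g^{\theta_1}_h$ a sign factor $s\big((y-\tau_-(y,\theta')\theta')_1/h\big)\cdots$ that oscillates in the directions perpendicular to $\theta_1$ with frequency $\sim 1/h$. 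The operator $T_1^{-1}$ integrates this along the ray $t\mapsto x-t\theta$, and for $\theta\in W_h$ the transverse component of $\theta$ has size $\gtrsim h^{\half}$, so along the ray the phase sweeps through $\gtrsim h^{-\half}$ periods of a mean-zero sign. I would make this quantitative by freezing $\theta'$, expanding the product of square waves in its multiple Fourier series, and bounding each resulting one-dimensional oscillatory integral in $t$ by the reciprocal of its phase derivative $\sim h^{-1}\sum_i m_i b_i(\theta')$, where $b(\theta')=\frac{d}{dt}(x-t\theta-\tau_-\theta')_{\perp}\approx-\theta_{\perp}$.

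This oscillatory estimate is the main obstacle, and driving it down to the stated order is where all the work lies. Two features make it delicate. The oscillatory factor is a product of $n-1$ one-dimensional square waves, so its $t$-frequencies are the multi-indexed combinations $\sum_i m_i b_i(\theta')$; one must control the small-divisor behaviour (for the sign choice $m_i=\operatorname{sign}(b_i)$ the dominant frequency is $\gtrsim|\theta_\perp|/h\gtrsim h^{-\half}$) while summing the slowly decaying square-wave Fourier coefficients. Moreover the amplitude $E(x,\theta,t)=\exp(-\int_0^t\sigma\,ds)$ is only Lipschitz in $t$, since $\sigma\in L^\infty$, so only a bounded-variation (single order) integration by parts is available; a naive such estimate extracts one power gain per transverse sweep, and the argument must be organized to reach $o(h)$. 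One must also verify that replacing $u_{\theta_1}$ by $Jg^{\theta_1}_h$ inside $A_2$, and the tail $\sum_{j\ge 2}K^j$, contribute at the same or higher order, which follows from the contraction $\|K\|<c<1$ together with the $L^1$ gains in \eqref{JfL1} and \eqref{A2wLinfty}.

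Finally, \eqref{uMinusJg2} follows by combining the first and third estimates. Splitting $S^{n-1}=W_h\cup\{|\theta-\theta_1|\le h^{\half}\}$, on $W_h$ the integrand is $o(h)$ pointwise by \eqref{uMinusJg3} while $|W_h|=O(1)$, and on the spherical cap $\{|\theta-\theta_1|\le h^{\half}\}$ the integrand is only $O(1)$ by \eqref{uMinusJg1} but the cap has measure $\sim h^{\half(n-1)}$. The threshold defining $W_h$ is precisely the place where the oscillation gain on $W_h$ is balanced against the measure of the cap, and the bookkeeping must be arranged so that the two contributions sum to $o(h)$ in $\|u_{\theta_1}(x,\cdot)-Jg^{\theta_1}_h(x,\cdot)\|_{L^1(S^{n-1})}$.
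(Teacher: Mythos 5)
Your architecture matches the paper's: the Neumann series from Proposition \ref{UDecomp}, an $O(1)$ bound coming from the $L^1$-in-angle size of $g^{\theta_1}_h$ for \eqref{uMinusJg1}, transverse oscillation killing the first scattering term on $W_h$ for \eqref{uMinusJg3}, and the split of $S^{n-1}$ into $W_h$ and the cap of measure $O(h^{\half(n-1)})$ for \eqref{uMinusJg2}. Your route to \eqref{uMinusJg1} via Proposition \ref{RegularityTheorem} with $p=1$ is a harmless variant of the paper's, which instead bounds $\|Jg^{\theta_1}_h(x,\cdot)\|_{L^1(S^{n-1})}$ directly via \eqref{JfL1} and then applies \eqref{A2wLinfty}.

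The gap is that the decisive estimate, \eqref{uMinusJg3}, is never actually established: you name it as ``the main obstacle \ldots{} where all the work lies'' and stop there. Moreover, the quantitative mechanism you sketch does not reach the target. A single integration by parts (all that a bounded-variation amplitude permits) against an oscillation of $t$-frequency $|\theta_\perp|/h$ gains one reciprocal power of that frequency, i.e.\ $O(h/|\theta_\perp|)$; at the edge of $W_h$, where $|\theta_\perp|\sim h^{\half}$, this is $O(h^{\half})$, not $o(h)$. So the plan as written would fail to prove \eqref{uMinusJg3}, and consequently the cap-versus-$W_h$ bookkeeping you defer for \eqref{uMinusJg2} cannot be closed either. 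For comparison, the paper disposes of this step by writing $A_2Jg^{\theta_1}_h(x,\theta)=\alpha(x,\theta)s(x_1/h)\cdots s(x_{n-1}/h)$ with $\alpha$ independent of $h$ and invoking the Riemann--Lebesgue lemma after applying $T_1^{-1}$; it also separates the first term $KJg^{\theta_1}_h$ (controlled on $W_h$ by oscillation) from the tail $\sum_{j\ge2}K^jJg^{\theta_1}_h$ (controlled everywhere by feeding the $L^1$ smallness \eqref{uMinusJg5} through \eqref{A2wLinfty}), rather than estimating $Ku_{\theta_1}$ in one piece as you propose. Your observation that the oscillatory phase is really $s\bigl((y-\tau_-(y,\theta')\theta')_1/h\bigr)\cdots$, with a $\theta'$-dependent $O(1)$ shift in the scaled variable, is a legitimate point that the paper's factorization elides --- but raising the difficulty is not the same as resolving it, and without a completed oscillatory estimate the proof is not done.
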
 

\begin{proof}
By Proposition \ref{UDecomp}, we have
\[
u_{\theta_1} = Jg^{\theta_1}_h + \sum_{j = 1}^{\infty} K^j (Jg^{\theta_1}_h).
\]
First, we note that for any fixed $x$, $g^{\theta_1}_h(x, \theta)$ is $\pm h^{1-n}$ in a neighbourhood of measure $h^{n-1}$ and zero otherwise, so $\| g^{\theta_1}_h(x, \cdot)\|_{L^1({S^{n-1}})} = O(1)$.  Then using~\eqref{JfL1}, we see that
\[
\|Jg^{\theta_1}_h(x, \cdot)\|_{L^{1}({S^{n-1}})} = O(1),
\]
and thus equation \eqref{A2wLinfty} implies that 
\begin{equation}\label{A2Jg}
\|A_2 Jg^{\theta_1}_h(x, \cdot)\|_{L^{\infty}({S^{n-1}})} = O(1).
\end{equation}
Eq.~\eqref{uMinusJg1} follows immediately. So far, we have not taken advantage of the spatial oscillation in $g^{\theta_1}_h$.  Notice that since $J$ is a multiplicative operator, $A_2$ is local in the spatial variables, and since both are positive operators, they preserve the spatial oscillation. In other words, we can still write
\[
A_2 Jg^{\theta_1}_h(x, \theta) = \alpha(x,\theta)s(x_1/h) \cdots s(x_{n-1}/h)
\]
where for fixed $\theta$, $\alpha(x,\theta)$, as a function of $x$, is independent of $h$. Now, recall that 
\[
(T_1^{-1}w)(x,\theta) = \int_0^{\tau_{-}(x,\theta)}\exp\left(-\int_0^{t}\sigma(x - s\theta)ds\right)w(x - t\theta, \theta)dt.
\]
Thus for $\theta$ outside of an $O(h^{\half})$ distance from $\theta_1$, the spatial oscillation of $A_2Jg^{\theta_1}_h$, combined with the Riemann-Lebesgue lemma, guarantees that 
\[
KJg^{\theta_1}_h(x, \theta) = T^{-1}_1 A_2 Jg^{\theta_1}_h(x, \theta) = o(h).
\]
In other words, 
\begin{equation}\label{uMinusJg4}
\|KJg^{\theta_1}_h\|_{L^{\infty}(X \times W_h)} = o(h).
\end{equation}
Now in ${S^{n-1}} \setminus W_h$, which has $O(h^{\half(n-1)})$ volume, we get from the estimates on $A_2 Jg^{\theta_1}_h(x, \cdot)$ that 
\[
\|T^{-1}_1 A_2 Jg^{\theta_1}_h(x, \cdot)\|_{L^{\infty}({S^{n-1}} \setminus W_h)} = O(1).
\]
Combining the two previous statements, we obtain
\begin{equation}\label{uMinusJg5}
\|K Jg^{\theta_1}_h(x, \cdot)\|_{L^1({S^{n-1}})} = \|T^{-1}_1 A_2 Jg^{\theta_1}_h(x, \cdot)\|_{L^1({S^{n-1}})} = o(h).
\end{equation}
Then, \eqref{A2wLinfty} says that 
\[
\|A_2 K Jg^{\theta_1}_h\|_{L^{\infty}(X \times {S^{n-1}})} = o(h).
\]
Therefore
\[
\|K^2 Jg^{\theta_1}_h\|_{L^{\infty}(X \times {S^{n-1}})} = \|T_1^{-1}A_2 K Jg^{\theta_1}_h\|_{L^{\infty}(X \times {S^{n-1}})} = o(h),
\]
and using the $L^{\infty}$ bounds on $K$, 
\begin{equation}\label{uMinusJg6}
\left\| \sum_{j=2}^{\infty} K^j Jg^{\theta_1}_h \right\|_{L^{\infty}(X \times {S^{n-1}})} = o(h).
\end{equation}
Combining \eqref{uMinusJg5} and \eqref{uMinusJg6} now gives \eqref{uMinusJg2}, and combining \eqref{uMinusJg4} and \eqref{uMinusJg6} gives \eqref{uMinusJg3}, completing the proof.

\end{proof}

We can now use Lemma~\ref{uMinusJg} to decompose the functional $H(g^{\theta_1}_h, g^{\theta_2}_h)$.

\begin{lemma}\label{ScatterHDecomp}
Let $\theta_1, \theta_2 \in {S^{n-1}}$, with $\theta_1 \neq \theta_2$.  Then for $h > 0$
\[
H(g^{\theta_1}_h, g^{\theta_2}_h) = \int_{S^{n-1}} A_2 Jg^{\theta_1}_h \tilde{J}g^{\theta_2}_h d\theta + o(h).
\] 
\end{lemma}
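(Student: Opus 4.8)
The plan is to insert the splitting $A = -\sigma\,\mathrm{Id} + A_2$ from \eqref{DefnA} into the definition of $H$, replace $u_{\theta_1}$ and $v_{\theta_2}$ by their leading approximations $Jg^{\theta_1}_h$ and $\tilde{J}g^{\theta_2}_h$, and show that the only piece surviving to order $h$ is $\int_{S^{n-1}} A_2 Jg^{\theta_1}_h \,\tilde{J}g^{\theta_2}_h\,d\theta$. Throughout I let $u_{\theta_1}$ solve \eqref{RTE} with data $g^{\theta_1}_h$ on $\Gamma_-$ and $v_{\theta_2}$ solve \eqref{AdjointRTE} with data $g^{\theta_2}_h$ on $\Gamma_+$; running the argument of Lemma~\ref{uMinusJg} on the adjoint equation \eqref{AdjointRTE} gives the analogues of \eqref{uMinusJg1}--\eqref{uMinusJg3} for $v_{\theta_2}-\tilde{J}g^{\theta_2}_h$, with $\theta_1$ replaced by $\theta_2$.

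First I would write
\[
H(g^{\theta_1}_h,g^{\theta_2}_h) = -\int_{S^{n-1}}\sigma\, u_{\theta_1} v_{\theta_2}\,d\theta + \int_{S^{n-1}} A_2 u_{\theta_1}\, v_{\theta_2}\,d\theta,
\]
and in each integral substitute $u_{\theta_1}=Jg^{\theta_1}_h+(u_{\theta_1}-Jg^{\theta_1}_h)$ and $v_{\theta_2}=\tilde{J}g^{\theta_2}_h+(v_{\theta_2}-\tilde{J}g^{\theta_2}_h)$, expanding each into four pieces. For the scattering integral, the piece $\int A_2 Jg^{\theta_1}_h\,\tilde{J}g^{\theta_2}_h\,d\theta$ is exactly the asserted main term. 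Each remaining piece is $o(h)$: the smoothing bound \eqref{A2wLinfty} turns the $L^1$-smallness $\|u_{\theta_1}(x,\cdot)-Jg^{\theta_1}_h(x,\cdot)\|_{L^1}=o(h)$ of \eqref{uMinusJg2} into $\|A_2(u_{\theta_1}-Jg^{\theta_1}_h)(x,\cdot)\|_{L^\infty}=o(h)$, which I pair with $\|\tilde{J}g^{\theta_2}_h(x,\cdot)\|_{L^1}=O(1)$; symmetrically, \eqref{A2Jg} gives $\|A_2 Jg^{\theta_1}_h(x,\cdot)\|_{L^\infty}=O(1)$, which I pair with the $L^1$ version of the estimate for $v_{\theta_2}-\tilde{J}g^{\theta_2}_h$.

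The delicate part is showing the absorption integral $\int\sigma u_{\theta_1}v_{\theta_2}\,d\theta$ is itself $o(h)$, so that it drops out entirely. Its leading piece $\int\sigma Jg^{\theta_1}_h\,\tilde{J}g^{\theta_2}_h\,d\theta$ vanishes identically once $h<\half|\theta_1-\theta_2|$: since $J$ and $\tilde{J}$ are multiplicative in $\theta$, the factor $Jg^{\theta_1}_h$ is supported in $\{\theta:|\theta-\theta_1|<h\}$ and $\tilde{J}g^{\theta_2}_h$ in $\{\theta:|\theta-\theta_2|<h\}$, and these angular supports are disjoint --- this is exactly where the hypothesis $\theta_1\neq\theta_2$ is used. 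For the cross term $\int\sigma Jg^{\theta_1}_h(v_{\theta_2}-\tilde{J}g^{\theta_2}_h)\,d\theta$ the crude bounds $\|Jg^{\theta_1}_h(x,\cdot)\|_{L^1}=O(1)$ and $\|v_{\theta_2}-\tilde{J}g^{\theta_2}_h\|_{L^\infty}=O(1)$ only give $O(1)$, so here the refined localization \eqref{uMinusJg3} is essential: for small $h$ the support $\{|\theta-\theta_1|<h\}$ of $Jg^{\theta_1}_h$ lies inside $\{\theta:|\theta-\theta_2|>h^{\half}\}$, on which $\|v_{\theta_2}-\tilde{J}g^{\theta_2}_h\|_{L^\infty}=o(h)$ by the adjoint form of \eqref{uMinusJg3}; pairing this with $\|Jg^{\theta_1}_h(x,\cdot)\|_{L^1}=O(1)$ gives $o(h)$. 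The mirror-image cross term is handled the same way using \eqref{uMinusJg3} for $u_{\theta_1}$, and the last piece $\int\sigma(u_{\theta_1}-Jg^{\theta_1}_h)(v_{\theta_2}-\tilde{J}g^{\theta_2}_h)\,d\theta$ is $o(h)$ directly from $\|u_{\theta_1}-Jg^{\theta_1}_h\|_{L^1}=o(h)$ and $\|v_{\theta_2}-\tilde{J}g^{\theta_2}_h\|_{L^\infty}=O(1)$.

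The main obstacle is thus the absorption integral: its leading term survives the $L^1$--$L^\infty$ accounting and must be killed by the disjointness of the angular supports, while its cross terms are negligible only because \eqref{uMinusJg3} upgrades the error $v_{\theta_2}-\tilde{J}g^{\theta_2}_h$ from $O(1)$ to $o(h)$ precisely on the band of directions where $Jg^{\theta_1}_h$ lives. Once all eight pieces are collected, the absorption integral is $o(h)$ and the scattering integral equals $\int_{S^{n-1}} A_2 Jg^{\theta_1}_h\,\tilde{J}g^{\theta_2}_h\,d\theta + o(h)$, which is the claim.
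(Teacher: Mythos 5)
Your proposal is correct and follows essentially the same route as the paper: the same eight-term expansion, the same $L^1$--$L^\infty$ pairings via \eqref{A2wLinfty}, \eqref{A2Jg}, \eqref{uMinusJg1}, and \eqref{uMinusJg2} for the scattering pieces, the disjointness of the angular supports of $Jg^{\theta_1}_h$ and $\tilde{J}g^{\theta_2}_h$ for the leading absorption piece, and the refined localization \eqref{uMinusJg3} (and its adjoint analogue) for the two absorption cross terms. The only cosmetic difference is that you carry the sign $-\sigma$ from \eqref{DefnA} explicitly, whereas the paper writes $+\int\sigma uv$; since that entire integral is shown to be $o(h)$ here, this does not affect the conclusion.
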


\begin{proof}
We can expand $H(g^{\theta_1}_h, g^{\theta_2}_h)$ as in Lemma~\ref{SingularFunctional} to get
\begin{eqnarray*}
H(g^{\theta_1}_h,g^{\theta_2}_h) &=& \int_{{S^{n-1}}}\sigma Jg^{\theta_1}_h\tilde{J}g^{\theta_2}_h \, d\theta +\int_{S^{n-1}}\sigma Jg^{\theta_1}_h (v-\tilde{J}g^{\theta_2}_h)d\theta \\
	   & & +\int_{S^{n-1}}\sigma (u-Jg^{\theta_1}_h)\tilde{J}g^{\theta_2}_h d\theta +\int_{S^{n-1}}\sigma(u-Jg^{\theta_1}_h) (v-\tilde{J}g^{\theta_2}_h)d\theta \\
	         & & +\int_{S^{n-1}} A_2(Jg^{\theta_1}_h)\tilde{J}g^{\theta_2}_h \, d\theta +\int_{S^{n-1}} A_2(Jg^{\theta_1}_h)(v-\tilde{J}g^{\theta_2}_h)d\theta \\
	         & & +\int_{S^{n-1}} A_2(u-Jg^{\theta_1}_h)\tilde{J}g^{\theta_2}_h d\theta+\int_{S^{n-1}} A_2(u-Jg^{\theta_1}_h)(v-\tilde{J}g^{\theta_2}_h)d\theta. \\
\end{eqnarray*}
Using \eqref{uMinusJg1} and \eqref{uMinusJg2} from Lemma \ref{uMinusJg}, we see that 
\[
\int_{S^{n-1}}\sigma(u-Jg^{\theta_1}_h) (v-\tilde{J}g^{\theta_2}_h)d\theta = o(h). 
\]
Moreover, for any fixed $x \in X$
\[
\|A_2(Jg^{\theta_1}_h)(x,\cdot)\|_{L^{\infty}({S^{n-1}})} \lesssim \|Jg^{\theta_1}_h(x,\cdot)\|_{L^1({S^{n-1}})} =  O(1),
\]
so combining this with \eqref{uMinusJg2}, we find that 
\[
\int_{S^{n-1}} A_2(Jg^{\theta_1}_h)(v-\tilde{J}g^{\theta_2}_h)d\theta = o(h).
\] 
Similar reasoning says that 
\[
\int_{S^{n-1}} A_2(u-Jg^{\theta_1}_h)\tilde{J}g^{\theta_2}_h d\theta+\int_{S^{n-1}} A_2(u-Jg^{\theta_1}_h)(v-\tilde{J}g^{\theta_2}_h)d\theta = o(h).
\]
Therefore we have
\begin{eqnarray*}
H(g^{\theta_1}_h,g^{\theta_2}_h) &=& \int_{{S^{n-1}}}\sigma Jg^{\theta_1}_h\tilde{J}g^{\theta_2}_h \, d\theta +\int_{S^{n-1}}\sigma Jg^{\theta_1}_h (v-\tilde{J}g^{\theta_2}_h)d\theta \\
	        & & +\int_{S^{n-1}}\sigma (u-Jg^{\theta_1}_h)\tilde{J}g^{\theta_2}_h d\theta +\int_{S^{n-1}} A_2(Jg^{\theta_1}_h)\tilde{J}g^{\theta_2}_h \, d\theta \\
	        & & +o(h) \\
\end{eqnarray*}
Now examine the term
\[
\int_{S^{n-1}}\sigma (u-Jg^{\theta_1}_h)\tilde{J}g^{\theta_2}_h d\theta .
\]
For small enough $h$, the function $\tilde{J}g^{\theta_2}_h(x, \theta)$, as a function of $\theta$, has support only on the set 
\[
W_h = \{ \theta \in {S^{n-1}}: |\theta - \theta_1| > h^{\half}\}.
\]
Then \eqref{uMinusJg3} from Lemma \ref{uMinusJg} says that 
\begin{eqnarray*}
\left|\int_{S^{n-1}}\sigma (u-Jg^{\theta_1}_h)\tilde{J}g^{\theta_2}_h d\theta \right| &\lesssim& \|u-Jg^{\theta_1}_h\|_{L^{\infty}(W_h)}\|\tilde{J}g^{\theta_2}_h\|_{L^1({S^{n-1}})} \\
&=& o(h). \\
\end{eqnarray*}
Similarly
\[
\int_{S^{n-1}}\sigma Jg^{\theta_1}_h (v-\tilde{J}g^{\theta_2}_h)d\theta = o(h),
\]
so
\[
H(g^{\theta_1}_h, g^{\theta_2}_h) = \int_{S^{n-1}} \sigma Jg^{\theta_1}_h \tilde{J}g^{\theta_2}_h d\theta + \int_{S^{n-1}} A_2 Jg^{\theta_1}_h \tilde{J}g^{\theta_2}_h d\theta + o(h).
\] 
Finally, if $h$ is sufficiently small compared to $|\theta_1 - \theta_2|$, then $Jg^{\theta_1}$ and $Jg^{\theta_2}$ have disjoint supports as functions of $\theta$. Therefore the first integral on the right side vanishes, and
\[
H(g^{\theta_1}_h, g^{\theta_2}_h) = \int_{S^{n-1}} A_2 Jg^{\theta_1}_h \tilde{J}g^{\theta_2}_h d\theta + o(h)
\] 
as desired.
\end{proof}

We are now ready for the proof of Theorem \ref{ScatteringRecovery}.  

\begin{proof}[Proof of Theorem \ref{ScatteringRecovery}]  

From Lemma \ref{ScatterHDecomp}, we have
\begin{equation}\label{ScatteringQuantity1}
H(g^{\theta_1}_h, g^{\theta_2}_h) = \int_{S^{n-1}} A_2 Jg^{\theta_1}_h \tilde{J}g^{\theta_2}_h d\theta + o(h).
\end{equation}
Now
\[
\tilde{J}g^{\theta_2}_h(x,\theta) = \exp\left(-\int_0^{\tau_{+}(x,\theta)}\sigma(x+s\theta)ds\right)g^{\theta_2}_h(x+\tau_{+}(x,\theta)\theta, \theta) .
\]
For fixed $x \in X$, $g^{\theta_2}_h(x,\theta) = \pm h^{(1-n)/2}f^{\theta_2}_h(\theta)$, where the sign depends on $x$. Therefore
\[
\tilde{J}g^{\theta_2}_h(x,\theta) = \pm h^{(1-n)/2}\exp\left(-\int_0^{\tau_{+}(x,\theta)}\sigma(x + s\theta)ds\right)f^{\theta_2}_h(\theta) .
\]
Since $f^{\theta_2}_h(\theta)$ is supported only in a small neighborhood of $\theta_2$, we can substitute the above into \eqref{ScatteringQuantity1} and use the Lebesgue differentiation theorem to get for small $h$ that
\begin{equation}
\label{UndoneScatteringIntegral}
H(g^{\theta_1}_h, g^{\theta_2}_h) = \pm A_2(Jg^{\theta_1}_h)(x,\theta_2)\exp\left(-\int_0^{\tau_{+}(x,\theta_2)}\sigma(x + s\theta_2)ds\right) + o(h).
\end{equation}
Since $\sigma$ is known, we obtain
\begin{equation}\label{ScatteringQuantity2}
 H(g^{\theta_1}_h, g^{\theta_2}_h)\exp\left(\int_0^{\tau_{+}(x,\theta_2)}\sigma(x + s\theta_2)ds\right)  = \pm A_2(Jg^{\theta_1}_h)(x,\theta_2) + o(h).
\end{equation}
Writing out the operator $A_2$ in full, we can rewrite the above as
\[
A_2(Jg^{\theta_1}_h)(x,\theta_2) = \int_{S^{n-1}} k(x, \theta_2, \theta ') Jg^{\theta_1}_h(x,\theta ') d\theta '.
\]
Now
\[
Jg^{\theta_1}_h(x,\theta) = \pm h^{(1-n)/2}\exp\left(-\int_0^{\tau_{-}(x,\theta)}\sigma(x - s\theta)ds\right)f^{\theta_1}_h(\theta).
\]
Therefore we can repeat the argument used to obtain \eqref{UndoneScatteringIntegral} to get
\[
A_2(Jg^{\theta_1}_h)(x,\theta_2) = \pm k(x,\theta_2, \theta_1)\exp\left(-\int_0^{\tau_{-}(x,\theta_1)}\sigma(x - s\theta_1)ds\right) + o(h). 
\]
Therefore \eqref{ScatteringQuantity2} can be rewritten as
\[
H(g^{\theta_1}_h, g^{\theta_2}_h)\exp\left(\int_0^{\tau_{+}(x,\theta_2)}\sigma(x + s\theta_2)ds\right) = \pm k(x,\theta_2, \theta_1)\exp\left(-\int_0^{\tau_{-}(x,\theta_1)}\sigma(x - s\theta_1)ds\right) + o(h).
\]
Rearranging, we have 
\[
k(x,\theta_2, \theta_1) = \left| H(g^{\theta_1}_h, g^{\theta_2}_h)\exp\left(\int_0^{\tau_{+}(x,\theta_2)}\sigma(x + s\theta_2)ds+\int_0^{\tau_{-}(x,\theta_1)}\sigma(x - s\theta_1)ds\right)\right| +o(h).
\]
This proves \eqref{KFromH}. Repeating for all $\theta_1, \theta_2$ pairs gives $k$.

\end{proof}

\section*{Acknowledgements}

The authors were supported in part by the NSF grants DMS-1619907 and DMR-1120923 to JCS.

\end{document}